\newcommand{\rar}{\rightarrow}
\newcommand{\lar}{\longrightarrow}
\newcommand{\surjects}{\twoheadrightarrow}
\newtheorem{Theorem}{Theorem}[section]
\newtheorem{Lemma}[Theorem]{Lemma}
\newtheorem{Corollary}[Theorem]{Corollary}
\newtheorem{Proposition}[Theorem]{Proposition}
\theoremstyle{definition}
\newtheorem{Remark}[Theorem]{Remark}
\newtheorem{Example}[Theorem]{Example}
\newtheorem{Conjecture}[Theorem]{Conjecture}
\newtheorem{Definition}[Theorem]{Definition}
\newtheorem{Question}[Theorem]{Question}
\def\sqr#1#2{{\vcenter{\hrule height.#2pt
			\hbox{\vrule width.#2pt height#1pt \kern#1pt
				\vrule width.#2pt}
			\hrule height.#2pt}}}
\def\phi{\varphi}
\def\VaVa{{\mathcal V}\kern-5pt {\mathcal V}}
\def\gr#1#2{{\rm gr}\, _{#1}(#2)}
\def\gr{{\rm gr}\,}
\def\hht{{\rm ht}\,}
\def\depth{{\rm depth}\,}
\def\Min{{\rm Min}\,}
\def\codim{{\rm codim}\,}
\def\grade{{\rm grade}\,}
\def\rk{\rm rank}
\def\hom{\mbox{\rm Hom}}
\def\syz{\mbox{\rm Syz}}
\def\Ext#1#2#3#4{{\rm Ext}\,^{#1}_{#2}({#3},{#4})}
\def\proj#1{{\rm Proj}\, (#1)}
\def\supp#1{{\rm Supp}\, (#1)}
\def\ini{\mbox{\rm in}}
\def\der#1{\mbox{\rm der}_k(#1)}
\def\O{{\mathcal O}}
\def\cl#1{{\mathcal #1}}
\def\phi{\varphi}
\def\hht{{\rm ht}\,}
\def\grade{{\rm grade}\,}
\def\fm{{\mathfrak m}}
\def\NN{\mathbb N}
\def\fp{{\mathfrak p}}
\def\fm{{\mathfrak m}}
\def\NN{\mathbb N}
\def\cl#1{{\cal #1}}
\def\rk{\rm rank}
\newcommand{\excise}[1]{}
\def\NZQ{\mathbb}               % the font for N,Z,Q,R,C
\def\NN{{\NZQ N}}
\def\AA{{\NZQ A}}
\def\PP{{\NZQ P}}
\def\Jc{{\mathcal J}}
\def\G{{\mathcal G}}
\def\opn#1#2{\def#1{\operatorname{#2}}} % to make operators
\opn\chara{char} \opn\length{\ell} \opn\pd{pd} \opn\rk{rk}
\opn\projdim{proj\,dim} \opn\injdim{inj\,dim} \opn\rank{rank}
\opn\depth{depth} \opn\grade{grade} \opn\height{height}
\opn\embdim{emb\,dim} \opn\codim{codim}
\opn\Tr{Tr} \opn\bigrank{big\,rank}
\opn\superheight{superheight}\opn\lcm{lcm}
\opn\trdeg{tr\,deg}%\emph{
	\opn\reg{reg} \opn\lreg{lreg} \opn\ini{in} \opn\lpd{lpd}
	\opn\size{size} \opn\sdepth{sdepth}
	\opn\link{link}\opn\fdepth{fdepth}\opn\lex{lex}
	\opn\tr{tr}
	\opn\type{type}
	\opn\div{div} \opn\Div{Div} \opn\cl{cl} \opn\Cl{Cl}
	\opn\Spec{Spec} \opn\Supp{Supp} \opn\supp{supp} \opn\Sing{Sing}
	\opn\Ass{Ass} \opn\Min{Min}\opn\Mon{Mon}
	\opn\Ho{H}
	\opn\Ann{Ann} \opn\Rad{Rad} \opn\Soc{Soc}  \opn\der{Der}
	\opn\bour{Bour} 
	\opn\Im{Im} \opn\Ker{Ker} \opn\Coker{Coker} \opn\Am{Am}
	\opn\Hom{Hom} \opn\Tor{Tor} \opn\Ext{Ext} \opn\End{End}
	\opn\Aut{Aut} \opn\id{id}
	\opn\nat{nat}
	\opn\pff{pf}%   \pf exists already
	\opn\Pf{Pf} \opn\GL{GL} \opn\SL{SL} \opn\mod{mod} \opn\ord{ord}
	\opn\Gin{Gin} \opn\Hilb{HP}\opn\sort{sort}
	\opn\PF{PF}\opn\Ap{Ap}
	\opn\aff{aff} \opn
\opn\relint{relint} \opn\st{st}
	\opn\lk{lk} \opn\cn{cn} \opn\core{core} \opn\vol{vol}  \opn\inp{inp} \opn\nilpot{nilpot}
	\opn\link{link} \opn\star{star}\opn\lex{lex}\opn\set{set}
	\opn\width{wd}
	\opn\Fr{F}
	\opn\QF{QF}
	\opn\G{G}
	\opn\type{type}\opn\res{res}
	\opn\log{Log}
	\opn\gr{gr}   
	\def\pot#1#2{#1[\kern-0.28ex[#2]\kern-0.28ex]}
	\opn\dirlim{\underrightarrow{\lim}}
	\opn\inivlim{\underleftarrow{\lim}}
\begin{document}
		%\begin{titlepage}
		
		\title[The Bourbaki degree ]{The Bourbaki degree of plane projective curves}
		\author{Marcos Jardim}
		\address{Universidade Estadual de Campinas (UNICAMP) \\ Instituto de Matem\'atica, Estat\'{\i}stica e Computação Cient\'{\i}fica (IMECC) \\ Departamento de Matem\'atica \\
			Rua S\'ergio Buarque de Holanda, 651\\ 13083-970 Campinas-SP, Brazil}
		\email{jardim@unicamp.br}
		\author{Abbas Nasrollah Nejad}
		\address{Department of Mathematics, Institute for Advanced Studies in Basic Sciences (IASBS), Zanjan 45137-66731, Iran}
		\email{abbasnn@iasbs.ac.ir}
		\author{Aron Simis}
		\address{Departamento de Matemática, CCEN, Universidade Federal de Pernambuco, Av. Jornalista Aníbal Fernandes, s/n, Cidade Universitária
			CEP 50740-560, Recife, Pernambuco
			Brazil}
		\email{aron.simis@ufpe.br}
		
		\subjclass[2020]{Primary: 13A02, 13D02, 13H15. Secondary: 14B05, 14H20, 14H50}   	
		
		\keywords{Plane curve singularities, Bourbaki ideal, syzygies, gradient ideal, graded free resolution, Cohen--Macaulay ring, free divisor}

		\begin{abstract}
			Bourbaki sequences and Bourbaki ideals have been studied by several authors since its inception sixty years ago circa. Generic Bourbaki sequences have been thoroughly examined by the senior author with B. Ulrich and W. Vasconcelos, but due to their nature, no numerical invariant was immediately available. Recently,  J. Herzog, S. Kumashiro, and D. Stamate introduced the {\em Bourbaki number} in the category of graded modules as the shifted degree of a Bourbaki ideal corresponding to submodules generated in degree at least the maximal degree of a minimal generator of the given module. The present work introduces the{\em Bourbaki degree} as the algebraic multiplicity of a Bourbaki ideal  corresponding to choices of minimal generators of minimal degree.
			The main intent is a study of plane curve singularities via this new numerical invariant. Accordingly, quite naturally, the focus is on the case where the standing graded module is the first syzygy module of the gradient ideal of a reduced form $f\in k[x,y,z]$ -- i.e., the main component of the module of logarithmic derivations of the corresponding curve. The overall goal of this project is to allow for a facet of classification of projective plane curves based on the behavior of this new numerical invariant, with emphasis on results about its lower and upper bounds.
		\end{abstract}
		\maketitle
		
		%	\tableofcontents
		
		\section*{Introduction}
		
		Much of  recent work on the homological facets of a plane curve $X=V(f)$ has largely focused on the nature of the gradient ideal $J_f$ of $f$. And yet, this approach is not so new (see, e.g., \cite{Simis-Koszul}, \cite{Simis-triply}, and possibly earlier), coming from the need to characterize certain classes of singularities in purely algebraic terms, that is to say, by drawing upon the core of commutative algebra.
		For example, a typical algebraic puzzle asks if there is a clear-cut watershed between the nature of an  ideal $I\subset R$ generated by three forms of the same degree and the gradient ideal of a reduced form. Of course, thanks to Dolgachev's theorem, the Cremona issue makes them as apart as possible. But, what about the homological behavior? Does it disentangle the conundrum as to when the strong geometric background of the gradient comes from a loose choice of three forms?
		
		The commutative algebra comes to help facing this conundrum by bringing out the related homology. This way, one trades out the hard geometric classification of singularity types for a ``classification'' of homological types referring to Betti numbers and degree shifts in minimal free resolutions. In terms of a classical algebraic language, one is after syzygies of graded modules and their degrees. In order to look closer at these syzygies one may resort to several methods, one of which is to reduce the problem to the case of homogeneous ideals.
		The choice of trailing after the idea of the Bourbaki ideal goes along this pattern.
		
		The notion of a Bourbaki sequence, that basically relates a module to an ideal via a pivotal submodule, was introduced in \cite{Bourbaki-source}.
		It has soon been considered by several authors, for various purposes (\cite{Ausl}, \cite{EvansGraham}, \cite{Miller}, \cite{Herzog-Kuhl},  \cite{RAM1}, \cite{Weyman}, \cite{Dimca-Sticlaru3}, \cite{Herzog_et_al}).
		The present work is closer to \cite{Herzog_et_al} in that one searches to extract a numerical invariant out of a Bourbaki sequence in the graded module category.
		However, in contrast to {\em loc.cit.}, where the Bourbaki number of a graded module $M$ is the shift of the Bourbaki ideal in a graded Bourbaki sequence corresponding to the choice of an equigenerated submodule generated in degree at least the degree of a minimal generator of $M$ of {\em maximal} degree, here we consider instead  a Bourbaki ideal coming from a Bourbaki sequence corresponding to choices of minimal generators of {\em minimal} possible degree.
		Moreover, the numerical invariant considered here is totally different, namely, it is the degree (multiplicity) of the Bourbaki ideal in the discussion.
		
		The above  is as much nearness as there exists between the two approaches.
		We focus on a particular graded module over the polynomial ring $R:=k[x,y,z]$, having in mind the understanding of projective plane curve theory from this angle.
		Namely, we consider the main direct summand of the module of logarithmic derivations associated with a reduced form $f\in R$. This module is canonically identified with the first syzygy module $\syz(R/J_f)$ of the gradient (Jacobian) ideal $J_f\subset R$ of $f$.
		Since $R/J_f$ determines the singularities of the associated projective plane curve $X:=V(f)\subset \mathbb P_k^2$, the quest has both algebraic and geometric facets.
		
		The novelty introduced here is a {\em Bourbaki degree} $\bour(X)$ of $X$, defined as the degree of $R/I_{\epsilon}$, where $I_{\epsilon}$ is the Bourbaki ideal associated to a choice of a minimal generator $\epsilon$ of $\syz(R/J_f)$ of minimal possible degree.
		Bourbaki ideals defined this way have been considered before by Dimca--Sticlaru (\cite[Section 5]{Dimca-Sticlaru3}, \cite[Section 3]{Dimca-Sticlaru4}) and, in a non-explicit way, by du Plessis--Wall (\cite{CTC-Plessis}).
		Both are closely related  to this work  but do not single out any new numerical invariant attached to a Bourbaki sequence, by rather focusing on other invariants, such as the total Tjurina number and the stability threshold related to the former.
		Drawing upon the facilitation of a Bourbaki degree, our work entails further precision to the relationship among some of these invariants and the curve geometry.
		We now briefly describe the sections of the paper.
		
		The first section contains notation and preliminaries for the rest of the paper.
		Here we introduce the module $\syz(R/J_f)$ of first syzygies of the gradient ideal $J_f$ in terms of the module of logarithmic derivations of $f\subset R=k[x,y,z]$, thereafter elaborating on its basic properties. It will be our standing graded module throughout the paper.
		
		The second section evolves around the details of a Bourbaki ideal $I_{\epsilon}\subset R$ associated with the choice of a minimal generator $\epsilon$ of $\syz(R/J_f)$. We give a formula for the degree (multiplicity) of $R/I_{\epsilon}$ in terms of the degree of $R/J_f$, the standard degree $e$ of $\epsilon$ and the homogeneous degree of $f$.
		This approach is pretty much the same as the one by Dimca and co-authors, but it has the advantage for the reader that it collects the basic facts and the respective proofs in one single theorem in purely algebraic terms, while the former is somewhat spread out in a few different papers and uses both algebraic and geometric arguments.
		
		Likewise, we recover the definitions introduced in du Plessis--Wall, where they make use of a suitable colon ideal.
		
		The tight relationship between the various terms in the formula above of the degree of $R/I_{\epsilon}$ has encouraged us to call it the {\em Bourbaki degree} $\bour(X)$
		of $X=V(f)\subset \PP^2_k$ associated to $\epsilon$, in the case where the homogeneous degree of the latter is the initial degree of $\syz(R/J_f)$.
		Focusing on the initial degree complies with Dimca's approach, although a Bourbaki degree as such has not been a theme thereof. 
		
		As a follow-up to the basic structural theorem and the introduction of the Bourbaki degree $\bour(X)$, we give some examples to illustrate the behavior of the latter. One of these has to do with the maximality behavior of nodal curves in this respect: besides attaining almost maximal bound for $\bour(X)$, it also frequently attains the maximal possible degree of a generating syzygy. In this thread of line, we conjecture that the curve $f=(x^2-y^2)z^{d-1} -(x^{d-1}-y^{d-1})x^2 -y^{d+1}$ (for any $d\geq 2$) does attain such maximum.
		
		Next, is one main theorem proving the upper bound $\bour(X)\leq e^2$ for a singular curve $X$, where $e$ is a minimal syzygy homogeneous degree. 
		Taking into account that the total Tjurina number coincides with the multiplicity $\deg(R/J_f)$, this bound implies the main result in \cite[Theorem 3.2]{CTC-Plessis}.
		The theorem also implies that if $e=1$ then $f$ is either free or nearly free, a result originally due to Dimca and co-authors.
		
		A note of interest in the proof of this theorem is the inclusion of radical ideals $\sqrt{H}\subset \sqrt{I_{\epsilon}}$, where $H$ denotes the ideal generated by the coordinates of a syzygy $\epsilon$ of minimal homogeneous degree. This inclusion might be strict or an equality as the choice of $\epsilon$ varies.
		
		In the sequel, we establish a few scattered facts in the case where $f$ is irreducible.
		Using that the total Milnor number is bounded by $d(d-1)$ in this case, we improve the structural lower bound to $\bour(X)$ all the way to $\bour(X)\geq d+e(e-d)$.
		As a corollary, there are no irreducible free divisors of degree less than $5$, a result that has been spread around as a clear statement, but no rigorous proof has been given so far. Finally, along this line, we show that if $f$ is an irreducible free divisor of degree $5$ then its minimal syzygy degree is $2$, a situation that repeats itself throughout many examples in the literature (see, e.g., \cite{SimToh}, \cite{Nanduri}).
		
		The section ends with a characterization of curves such that $\bour(X)\leq 2$, which takes care of the nearly free curves and some one-plus curves introduced by Dimca and co-authors.
		
		The last section focuses on lower bounds to $\bour(X)$.
		We restrict ourselves to quasi-homogeneous singularities, with particular endeavor for the case of nodal singularities. In the spirit mentioned before in this Introduction, yet another paradigm of the maximality phenomenon of nodal singularities now takes place, this time around  in regard to the minimal syzygy degree, which is now at least $d-1$.
		By drawing upon the known upper bounds for the number of singular points, we state lower bounds for $\bour(X)$ in the reduced and irreducible cases, reflecting the cases where $e=d-1$ or $e=d$.
		Next is a final blow about the maximality of nodal curves, to the effect that a curve having a unique singular point, which is nodal, is characterized by the value $\bour(X)=d^2-1$.
		
		We end the section with some comments on the relation to Dolgachev's theorem.
		In particular, we give another proof of the case of Dolgachev's theorem where one assumes that the curve is quasi-homogeneous.
		This observation has been stated before in \cite{Abbas} and \cite{Sim_on_Dolg}.
		
		\subsection*{Acknowledgments}
		MJ is supported by the CNPQ grant number 305601/2022-9 and the FAPESP Thematic Project number 2018/21391-1. AAN is partially supported by the FAPESP grant number 2022/09853-5. AS is partially supported by a CNPq grant number 301131/2019-8. AAN and AS thank for the warm hospitality during their visits to IMECC-UNICAMP.

		\section{Preliminaries}
		
		\subsection{Logarithmic derivations}	
		
		In this short section, we recall the notion of the module (respectively, sheaf) of logarithmic derivations, frequently referred to as the {\em Derlog} module (respectively,  sheaf). For an encompassing treatment, see \cite{Saito}, \cite{Simis_free}.
		
		Fix a perfect infinite field $k$ of high enough characteristic (as a function of certain data). For the geometric purpose, we may assume that it is algebraically closed of characteristic zero.
		Let $R=k[x_1, \ldots, x_m]$ denote a standard graded polynomial ring. The set of $k$-linear derivations of $R$ into itself is an $R$-module, denoted $\der_k(R,R)$, or more simply, $\der_k(R)$. The usual partial derivatives $\partial_{x_i}:=\partial/\partial x_i$, as maps from $R$ to $R$, are $k$-linear derivations and form a free basis of the $R$-module $\der_k(R)$.
		Since $R$ is standard graded, then $\der_k(R)$ admits a natural $\NN$-grading where 
		\[\der_k(R)_n=\sum_{i=1}^m R_n\partial_{x_i}, \; n\in \NN,\]
		where $R_n$ denotes the $k$-vector space spanned by the homogeneous polynomials of degree $n$.
		%Thus, a typical element of $\der_k(R)$ has the form $
		
		The following notion can be stated more generally for ideals $I\subset R$, but in this work, we are only interested in the case of a homogeneous principal ideal $(f)$, where $f\in R_{d+1}$, with $d\geq 1$. Thus, we are looking at a  hypersurface in projective space.
		
		\begin{Definition} 
			The {\em module of logarithmic derivations} of $f$ is the submodule
			\[\der_f(R):=\{\delta\in\der_k(R)\ | \ \delta(f)\in (f) \}\subset \der_k(R).  \]
		\end{Definition}
		Since $f\der_k(R)\subset \der_f(R)$,  latter has same rank as $\der_k(R)$
		
		It has yet another notable submodule, namely, 
		\[\der_f(R)^0:=\{\delta\in\der_k(R)\ | \ \delta(f)=0\}\subset \der_k(R).  \]
		
		Writing a $k$-linear derivation	as $\delta=\sum_{i=1}^m g_i\partial_{x_i}$ then, upon identification $\der_k(R)\simeq R^m$ via orderly mapping the partial derivatives to the canonical basis of $R^m$,  $\der_f(R)^0$ becomes the $R$-module $\mathcal{Z}(\partial)$ of (first) syzygies of the set $\{\partial f/\partial_{x_i}\, |\, 1\leq i\leq m\}$.
		Since the latter are canonical generators of the {\em gradient ideal} $J_f$ of $f$, we rename ${\rm Syz} (J_f):=\mathcal{Z}(\partial)$.
		
		With this notation, there is a remarkable short exact sequence of $R$-modules (see, e.g., \cite[Proposition 3.2]{Simis_free})
		$$0\rar {\rm Syz} (J_f) \lar \der_f(R) \lar J_f\colon \kern-2pt{_R} (f) \rar 0.$$
		Assuming, moreover, that char$(k)$ does not divide $d+1$, the above sequence splits, and the complementary direct summand is identified with the 
		Euler derivation $\delta_E:=\sum_{i=1}^m x_i\partial_{x_i}$: 
		\begin{equation}\label{decomposition}
			\der_f(R)={\rm Syz} (J_f) \bigoplus R\delta_E,
		\end{equation}
		
		\subsection{The role of syzygies}
		
		Note the graded presentation of the gradient ideal
		\begin{equation}\label{Gradient_presentation}
			0\rar  \sum_e R \epsilon_{d+e}(-(d+e)) \lar R^3(-d) \stackrel{\Theta_f}{\lar} J_f \rar 0,
		\end{equation}
		where $\Theta_f$ maps the canonical $i$th basis element to $\partial_{x_i}$, while $\epsilon_{d+e}$ is a syzygy of degree $d+e$ in the grading of $R^3(-d)$, i.e., of standard degree $e$ in the polynomial ring $R$. Shifting by $d$, we can write
		\begin{equation}\label{shifted_presentation}
			0\rar  {\rm Syz} (J_f) \lar R^3\stackrel{\Theta_f}{\lar} J_f (d)\rar 0.
		\end{equation}
		Thus,  throughout ${\rm Syz} (J_f)$ is the  module of syzygies of $J_f$ in a graded resolution of the latter, with degrees of generators  setback to standard degree.

		From now on, assume that $m=3$ and that $f$ is reduced (i.e., $(f)$ is a radical ideal). For common reasons, we replace the variables by $x,y,z$.
		Throughout,  $k$ is algebraically closed and char$(k)$ does not divide $d+1$. In particular, $f\in J_f$.
		
		It is natural to assume  that the partial derivatives $f_x,f_y,f_z$ of $f$ are algebraically independent over $k$, as otherwise in this dimension the Hesse--Gordan--Noether classical result forces $f_x,f_y,f_z$ to actually be $k$-linearly independent, i.e., dependent upon only two variables up to a linear change of variables (see, e.g., \cite[Proposition 2.7]{CilRuSim}). In other words, the algebraic dependence of $f_x,f_y,f_z$ implies that $V(f)$ is a cone.  The hypothesis that $f_x,f_y,f_z$ of $f$ are algebraically independent over $k$ can be rephrased, e.g., to the effect that $k$-subalgebra $k[f_x,f_y,f_z]\subset R=k[x,y,z]$ has dimension three, or that the gradient ideal $J_f$ has maximal analytic spread, or still that the fiber cone algebra of $J_f$ is (isomorphic to) $k[x,y,z]$.
		
		Since $f$ is reduced, $J_f$ has codimension at least two, hence it is locally Cohen--Macaulay in the punctured spectrum of $R=k[x,y,z]$ (being  primary to the maximal ideal or the ring itself in every localization).
		Thus, locally in the punctured spectrum, $J_f$ is either a codimension two perfect ideal or else the localized ring. 
		Then (\ref{shifted_presentation}) implies that locally in the punctured  spectrum, ${\rm Syz} (J_f)$ is free in two generators.
		In particular, ${\rm Syz} (J_f)$ has rank two. In addition, being a submodule of a free module, it is reflexive.
		
		Let $X:=V(f)\subseteq \PP^2$ denote the corresponding reduced  projective plane curve of degree $d+1$, with singular scheme ${\rm Proj}(R/J_f)$. Throughout we will be mainly focusing on a singular curve $X$, so ${\rm codim}(J_f)=2$ will be the case.
		
		The coherent sheaf associated to the graded $R$-module  ${\rm Syz} (J_f)$  is  denoted $\mathcal{T}_f$. With $\Jc_f$ denoting the ideal sheaf associated to the gradient ideal $J_f$, (\ref{shifted_presentation}) reads out as an exact sequence
		\begin{equation}\label{gradient_Geom}
			0\rar \mathcal{T}_f \lar \O_{\PP^2}\stackrel{\Theta_f}{\lar} \Jc_f(d)\rar 0.
		\end{equation}
		This gives an identification 
		\[{\rm Syz} (J_f)_n\simeq \mathrm{H}^0(\PP^2, \mathcal{T}_f(n)), \; n\in \NN, \]
		thus substantiating the terminology {\em vector fields sections} annihilating $f$ used in \cite{CTC-Plessis}.
		
		\section{The graded Bourbaki ideal associated with a projective plane curve}
		
		We introduce the Bourbaki ideal of a projective plane curve in an algebraic way, pretty much as in  \cite[Section 5]{Dimca-Sticlaru3}, with slight modifications to serve our present purpose.
		Generalizations to arbitrary dimensions can be found in \cite{Herzog-Kuhl} or \cite{RAM1} and, more recently, in \cite{Herzog_et_al}.
		
		\subsection{Basic theorem}
		Let $\epsilon\in {\rm Syz} (J_f)$ be a syzygy of degree $e$ (in the geometric language as above, a global section of the shifted sheaf $\mathcal{T}_f(e))$.
		The inclusion $\epsilon\in {\rm Syz} (J_f)$ induces a homogeneous injective map $R(-e) \stackrel{\epsilon}{\lar} {\rm Syz} (J_f)$ sitting on an exact sequence of graded $R$-modules
		\begin{equation}\label{basic_sequence}
			0\rar  R(-e) \stackrel{\epsilon}{\lar} {\rm Syz} (J_f) \lar {\rm coker}(\epsilon) \rar 0.
		\end{equation}
		Setting $M_\epsilon:=	{\rm coker}(\epsilon)$,  one has the following  commutative diagram of graded $R$-modules
		\begin{equation}\label{diag4}
			\begin{split} \xymatrix@R-2ex{ 
					& 0 \ar[d] & 0 \ar[d] & & \\
					& R(-e)\ar[d]^{{\epsilon}}\ar@{=}[r] & R(-e)\ar[d]^ -{\tilde{\epsilon}}& \\
					0\ar[r] & {\rm Syz} (J_f) \ar[r]\ar[d]^u & R^{3} \ar[r]^-{\Theta_f}\ar[d] & J_{f}(d) \ar@{=}[d]\ar[r] & 0 \\
					0\ar[r] & M_\epsilon \ar[r]\ar[d] & R^3/\tilde{\epsilon}(R(-e)) \ar[r]\ar[d] & J_{f}(d)  \ar[r] & 0\\
					& 0 & 0 &  
			} \end{split}
		\end{equation}
		The following is the basic theorem that will guide our results.
		It can be considered as an encore to \cite[Theorem 5.1]{Dimca-Sticlaru3}, with a different emphasis as convenient to the sequel.
		\begin{Theorem}\label{Bourbakiideal}
			Let $\epsilon\in {\rm Syz} (J_f)$ be a minimal homogeneous generator of degree $e\geq 1$.  Then the following hold. 
			\begin{enumerate}
				\item[{\rm (a)}]  $M_\epsilon$ is free if and only if $f$ is a free divisor.
				In this case, $\deg(R/J_f)=d^2 +e(e-d)$.
				
				\item[{\rm (b)}] 	If $f$ is not a free divisor, $M_\epsilon$ is  isomorphic to a proper homogeneous ideal $I_\epsilon\subset R$ of codimension two, such that the induced isomorphism $M_\epsilon\simeq I_\epsilon(e-d)$ is homogeneous of degree zero.
				In addition, for such an ideal $I_\epsilon$ one has 
				\begin{equation}\label{degree_of_Bourbaki_ideal}
					\deg (R/I_\epsilon)=\left\{
					\begin{array}{ll}
						d^2+e(e-d)-\deg(R/J_f) & \mbox{if $f$ is singular}\\
						d^2+e(e-d) & \mbox{if $f$ is smooth}.
					\end{array}
					\right.
				\end{equation}
				
				\item[{\rm (c)}] If $f$ is not a free divisor, pick a complete set of minimal homogeneous generators of the $R$-module ${\rm Syz} (J_f)$ that include $\epsilon$, and let
				\begin{equation}\label{resolution_of_syz}
					0\rar F_1 \stackrel{\left[\lambda\atop \psi\right]}{\lar}  R(-e)\oplus F_0 \stackrel{(\epsilon,\phi)}{\lar} 	{\rm Syz} (J_f)\rar 0 
				\end{equation}
				denote a minimal graded free resolution based on these generators, where the remaining shifts have not been specified.
				Then a graded minimal free resolution of $I_\epsilon$ is
				\begin{equation}\label{resolution_of_Bourbaki_ideal}
					0\rar F_1(d-e)) \stackrel{\psi}{\lar}  F_0(d-e)\stackrel{u\circ \phi}{\lar} I_\epsilon \rar 0.
				\end{equation}
				In particular, $I_\epsilon$  is a codimension $2$ perfect ideal.
			\end{enumerate}
		\end{Theorem}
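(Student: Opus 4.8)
The plan is to produce the resolution (\ref{resolution_of_Bourbaki_ideal}) of $I_\epsilon$ by transporting the minimal resolution (\ref{resolution_of_syz}) of ${\rm Syz}(J_f)$ across the surjection $u\colon {\rm Syz}(J_f)\twoheadrightarrow M_\epsilon$ of (\ref{basic_sequence}) (the left-hand column of (\ref{diag4})). Since $M_\epsilon={\rm coker}(\epsilon)$ one has $u\circ\epsilon=0$, so composing the cover $(\epsilon,\phi)\colon R(-e)\oplus F_0\to{\rm Syz}(J_f)$ with $u$ annihilates the $R(-e)$ summand and leaves a surjection $u\circ\phi\colon F_0\twoheadrightarrow M_\epsilon$. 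The first step is to identify its kernel. If $v\in F_0$ satisfies $(u\circ\phi)(v)=0$, then $\phi(v)\in\ker u=\epsilon(R(-e))$ by exactness of (\ref{basic_sequence}), say $\phi(v)=\epsilon(r)$; then $(-r,v)\in\ker(\epsilon,\phi)$, hence $(-r,v)=(\lambda(w),\psi(w))$ for a necessarily unique $w\in F_1$, and therefore $v=\psi(w)$. Conversely, $\epsilon\lambda+\phi\psi=0$ gives $u\phi\psi=-u\epsilon\lambda=0$. Thus $\ker(u\circ\phi)=\psi(F_1)$, and we obtain an exact sequence $F_1\xrightarrow{\psi}F_0\xrightarrow{u\circ\phi}M_\epsilon\to 0$.

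The second step is to show that $\psi$ is injective, which for a presentation matrix is not automatic. From (\ref{resolution_of_syz}) together with $\operatorname{rank}{\rm Syz}(J_f)=2$ one gets $\operatorname{rank}F_0=\operatorname{rank}F_1+1$, while (\ref{basic_sequence}) gives $\operatorname{rank}M_\epsilon=1$. Since $M_\epsilon\cong F_0/\psi(F_1)$, this forces $\operatorname{rank}\psi(F_1)=\operatorname{rank}F_0-1=\operatorname{rank}F_1$, so $\ker\psi$ has rank zero; being a submodule of the free module $F_1$ it is torsion free, hence $\ker\psi=0$. Thus $0\to F_1\xrightarrow{\psi}F_0\xrightarrow{u\circ\phi}M_\epsilon\to 0$ is a free resolution of length one, and it is minimal because, (\ref{resolution_of_syz}) being minimal, the entries of $\left[\lambda \atop \psi\right]$ --- in particular those of $\psi$ --- lie in $\mathfrak{m}$, so $\psi(F_1)\subseteq\mathfrak{m}F_0$. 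Finally, the degree-zero isomorphism $M_\epsilon\simeq I_\epsilon(e-d)$ from part (b) identifies $M_\epsilon(d-e)$ with $I_\epsilon$; twisting the resolution above by $d-e$ then turns it into precisely (\ref{resolution_of_Bourbaki_ideal}).

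For the closing claim, the resolution just produced shows $\pd_R I_\epsilon\le 1$, hence $\pd_R(R/I_\epsilon)\le 2$; on the other hand $R$ is Cohen--Macaulay and $\codim I_\epsilon=2$ by part (b), so the Auslander--Buchsbaum formula gives $\pd_R(R/I_\epsilon)\ge\grade I_\epsilon=2$. Therefore $\pd_R(R/I_\epsilon)=2=\grade I_\epsilon$, that is, $I_\epsilon$ is a codimension-two perfect ideal.

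The homological content is routine once the diagram (\ref{diag4}) is in place; the two steps that deserve care are the injectivity of $\psi$ (secured by the rank count, since a presentation map need not be injective) and the bookkeeping of grading shifts, ensuring the transported complex carries exactly the twist $d-e$ predicted by part (b). I do not anticipate any obstacle beyond these.
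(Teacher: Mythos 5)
Your argument, as written, proves only item (c) of the theorem, and for that item it is correct and essentially the route the paper takes: the paper transports the resolution \eqref{resolution_of_syz} across the surjection $u$ by means of the commutative diagram \eqref{diagtorsion} and the snake lemma (which identifies $F_1$ with $\ker\overline{(\epsilon,\phi)}$), whereas you carry out the same diagram chase by hand and secure the injectivity of $\psi$ separately by a rank count; both are fine, and your Auslander--Buchsbaum argument for perfection is a legitimate alternative to the paper's appeal to the Hilbert--Burch picture.

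The genuine gap is that items (a) and (b) are not proved at all --- indeed you invoke (b) twice as an input (``the degree-zero isomorphism $M_\epsilon\simeq I_\epsilon(e-d)$ from part (b)'' and ``$\codim I_\epsilon=2$ by part (b)''). These are the substantive claims of the theorem. What is missing is: the observation that $M_\epsilon$ is torsion-free of rank one (as the quotient of a torsion-free module by a free submodule) and hence embeds as a homogeneous ideal which, after extracting a gcd in the UFD $R$, may be taken of height at least two; the proof that the height is \emph{exactly} two (the paper shows that grade $\geq 3$ would give $\Ext^1(I_\epsilon,R(-e))=\Ext^2(R/I_\epsilon,R(-e))=0$, hence a splitting of the dualized Bourbaki sequence, hence $\syz(J_f)^*\simeq R^2$ and, by reflexivity, freeness of $\syz(J_f)$, contradicting the hypothesis that $f$ is not free); the determination of the twist $s=e-d$, which the paper extracts from a Hilbert-polynomial computation on the bottom row of \eqref{diag4}; the equivalence in (a) between freeness of $M_\epsilon$ and freeness of $f$ (splitting of \eqref{basic_sequence} and the Hilbert--Burch resolution of $J_f$); and both degree formulas, in particular \eqref{degree_of_Bourbaki_ideal}, which is the quantitative heart of the theorem and of the entire paper. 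None of this is routine bookkeeping: without it the resolution you construct carries an unjustified twist, and the codimension-two hypothesis underpinning your perfection argument is unverified.
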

		\begin{proof}
			First,  $M_\epsilon$ is a torsion-free module of rank one. Indeed, quite generally, if $F\subset E$ is a free submodule of a torsionfree module $E$ then $E/F$ is torsionfree. This is because $F:_E (a)=F$ for any $0 \neq a \in R$.
			Clearly, $\rk E/F=\rk E-\rk F$.
			
			(a)
			Suppose that $M_{\epsilon}$ is a free $R$-module. Since $M_{\epsilon}$ has rank one, $M_\epsilon\simeq R$. Then the exact sequence (\ref{basic_sequence}) splits
			as $\syz(J_f)\simeq R(-e)\oplus R$, that is, $\syz(J_f)$ is a free $R$-module generated by $\epsilon$ and another minimal generator, necessarily, of (standard) degree $d-e$. In other words, one gets a graded isomorphism 
			$$\syz(J_f)\simeq R(-e)\oplus R(d-e)=R(-e)\oplus R(-(e-d)).$$
			This means that $J_f$ has a Hilbert--Burch free resolution, i.e., it is a codimension two perfect ideal. In other words, $f$ is a free divisor (\cite[Proposition 3.7]{Simis_free}).
			
			The converse is clear.
			
			The second statement is a calculation with the Hilbert polynomial along the free resolution of $R/J_f$:
			$$0\rar R(-(d+e))\oplus R(-(2d-e))\lar R(-d)^3 \lar R \lar R/J_f\rar 0.$$
			
			(b) 
			Again quite more generally, any finitely generated torsionfree module over a Noetherian ring $R$, having  rank $r$, is isomorphic to a submodule of $R^r$ (\cite[Proposition 3.5.3]{Simis_book}).
			
			Thus, in the present case, $M_\epsilon$ can be embedded as an ideal $I_\epsilon\subset R$.
			Since $f$ is not a free divisor by assumption, item (a) implies that $I_\epsilon$ is not a free $R$-module, hence it is a proper ideal that is not principal.
			Suppose that $I_\epsilon$ has height one.
			Since $R$ is an UFD, $I_\epsilon=a\tilde{I}$, for some $0\neq a\in R$ and an ideal $\tilde{I}$ of height at least two. Then,  $I_\epsilon$ and $\tilde{I}$ are isomorphic as $R$-modules by multiplication by $a^{-1}$, hence $M_\epsilon\simeq  \tilde{I}$.
			Since $M_\epsilon$ is graded, $I_\epsilon$ is homogeneous and so $a$ and $\tilde{I}$ can be taken to be homogeneous.
			Finally, we rename $\tilde{I}$ back to $I_\epsilon$.
			
			Next, we claim that, under the assumption of the item,  $I_\epsilon$ has codimension exactly two.
			Indeed, if  $I_\epsilon))$ has a regular sequence of length three, then by ``d\'ecalage'' (\cite[Proposition 6.2.77]{Simis_book}) , $\Ext^1(I_\epsilon,R(-e))=\Ext^2(R/I_\epsilon,R(-e))=0$. Dualizing the exact sequence $0\rar R(-e) \lar {\rm Syz}(J_f) \lar I_\epsilon  \rar 0$ into $R(-e)$
			yields the split exact sequence
			$$0\rar \hom (I_\epsilon, R(-e)) \lar \hom ({\rm Syz}(J_f), R(-e)) \lar \hom (R(-e), R(-e))\simeq R \rar 0.$$
			Therefore, ${\rm Syz}(J_f)^*\simeq \hom ({\rm Syz}(J_f), R(-e))\simeq R^2,$
			hence ${\rm Syz}(J_f)$ is free since it is reflexive. Then $f$ is a free divisor, contradicting the assumption of this item.

			It remains to determine a shift $s$ such that the induced isomorphism $M_\epsilon\simeq I_\epsilon(s)$ is graded homogeneous (i.e., of degree zero).
			For this, we resort to a Hilbert function calculation.
			
			We use the additive property of the Hilbert polynomial on the lowest horizontal graded exact sequence in~\eqref{diag4}. Then we draw on the respective tautological presentations of $R(d)/J_f(d)$ and of $R(s)/I_\epsilon(s)$ and use the fact that, since both $R(d)/J_f(d)$ and $R(s)/I_\epsilon(s)$ are one dimensional, their degrees (multiplicities) coincide with the corresponding Hilbert polynomials.
			
			Thus, we get:		
			{\Small
				\begin{eqnarray*}
					0&=&\Hilb_{R^3}(t)-\Hilb_{R(-e)}(t)-\Hilb_{J_f(d)}(t)-\Hilb_{I_\epsilon(s)}(t)\\
					&=&\Hilb_{R^3}(t)-\Hilb_{R(-e)}(t)-\Hilb_{R(d)}(t)-\Hilb_{R(s)}(t)+ \Hilb_{R(d)/J_f(d)}(t) +\Hilb_{R(s)/I_\epsilon(s)}(t)\\
					&=&3\binom{t+2}{2}-\binom{t-e+2}{2}-\binom{t+d+2}{2}-\binom{t+s+2}{2} +\Hilb_{R(d)/J_f(d)}(t) +\mathfrak{e}_0(R/I_\epsilon)\\
					&=& (e-d-s)t+\frac{1}{2}(-e^2-d^2-s^2+3e-3d-3s) +\Hilb_{R(d)/J_f(d)}(t) +\mathfrak{e}_0(R/I_\epsilon),
			\end{eqnarray*}}
			
			\noindent because $\dim R/I_\epsilon=2$, where $\mathfrak{e}_0$ denotes the algebraic multiplicity of a homogeneous quotient $R/I$.
			Since we are thinking of $t$ as a variable over $k$, and $[\Hilb_{R(d)/J_f(d)}(t)]_t=0$ for $t\geq 1$ as $\dim R/J_f\leq 1$,  it follows that  $s=e-d$ and, additionally, 
			\begin{eqnarray*}
				\mathfrak{e}_0(R/I_\epsilon)&=&\frac{1}{2}\left( d^2+e^2+s^2-3(e-d-s) \right)- \Hilb_{R(d)/J_f(d)}(t)\\
				&=& d^2+e(e-d)- \Hilb_{R(d)/J_f(d)}(t).
			\end{eqnarray*}
			Now, replacing the multiplicity by the degree in case it is defined, we conclude:
			\begin{equation*}
				\deg (R/I_\epsilon)=\left\{
				\begin{array}{ll}
					d^2+e(e-d)-\deg(R/J_f) & \mbox{if $f$ is singular}\\
					d^2+e(e-d) & \mbox{if $f$ is smooth}.
				\end{array}
				\right.
			\end{equation*}
			%We note, for later purposes, that the latter values are kosher in the geometric version as well, where $R/I_\epsilon$ and $R/J_f$ are replaced by the respective projective varieties of $\PP^2$.
			
			(c)  With the assumed notation of the item, one has te following commutative diagram
			\begin{equation}\label{diagtorsion}
				\begin{split} \xymatrix@R-2ex{ 
						&  & 0\ar[d] &0\ar[d] & \\
						&0\ar[d]\ar[r]&R(-e)\ar[d]\ar@{=}[r]&R(-e)\ar[d]^{\epsilon}\ar[r]&0\\
						0\ar[r]&F_1\ar[d]\ar[r]&R(-e)\oplus F_0\ar[d]\ar[r]^-{(\epsilon,\, \varphi)}&\syz(J_f)\ar[d]\ar[r]&0\\
						0\ar[r]&G\ar[r]&(R(-e)\oplus F_0)/R(-e) \ar[d]\ar[r]^-{\overline{(\epsilon,\phi)}}&\syz(J_f)/R(-e)\ar[d]\ar[r]&0\\
						& & 0 & 0& 
				} \end{split}
			\end{equation}
			where $G$ is the kernel of $\overline{(\epsilon,\phi)}$.
			By the snake lemma, the leftmost vertical map $F_1\lar G$ is an isomorphism.
			Since the map $\overline{(\epsilon,\phi)}$ is identified with $u\circ \phi$, then the lowest horizontal exact sequence gives free  resolution of ${\rm Syz}(J_f) /R(-e)=M_{\epsilon}\simeq I_\epsilon(e-d)$.
			Shifting by $-(e-d)=d-e$ gives the stated free resolution of $I_{\epsilon}$..
			
			In particular, since $I_\epsilon$ has codimension two, it is identified with the ideal of maximal minors of a Hilbert--Burch matrix representing the rank two map $F_1\lar F_0$.
			Thus, it is a perfect (Cohen--Macaulay) ideal.
		\end{proof}		
		
		\begin{Remark}
			In wrapping up the proof of item (b) above, we set $\Hilb_{R(d)/J_f(d)}(t)=0$ when $J_f$ is an $\fm$-primary ideal. However, this is not saying that the algebraic multiplicity of the ring $R/J_f$ is zero, since, for an $\fm$-primary ideal $\mathfrak{a}\subset R$, the multiplicity of $R/\mathfrak{a}$ is defined as the length of $R/J_f$. In particular, this should not be confused with the geometric degree of ${\rm Proj}(R/J_f)$, which is zero because the latter is the empty set.
		\end{Remark}
		
		\begin{Corollary}
			If $f$ is not a free divisor, let $I_{\epsilon}$ stand by means of its  graded resolution  as in {\rm Theorem~\ref{Bourbakiideal} (c)}. Then $I_{\epsilon}=I_v$, where $I_v$ is the ideal of $R=k[x,y,z]$ as defined in \cite[After Lemma 1.1]{CTC-Plessis}.
		\end{Corollary}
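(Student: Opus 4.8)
The plan is to realize both $I_\epsilon$ and $I_v$ as the image of one and the same explicit graded surjection out of $\syz(J_f)$, and then let the rigidity already contained in Theorem~\ref{Bourbakiideal} force the equality.

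First I would recall and rephrase du Plessis--Wall's definition: writing $\epsilon=(a,b,c)$, so that $af_x+bf_y+cf_z=0$, the ideal $I_v\subset R$ consists of those $h\in R$ for which there is a syzygy $\rho\in\syz(J_f)$ with $\epsilon\times\rho=h\cdot(f_x,f_y,f_z)$; this is exactly their colon ideal ``after Lemma~1.1'', reformulated. To see that this prescription makes sense, I would first note that $\gcd(f_x,f_y,f_z)=1$: since $f$ is reduced and $\chara(k)\nmid d+1$, the Euler relation shows any common factor of the partials divides $f$, hence is a unit as $f$ is squarefree. Then, for $\epsilon,\rho\in\syz(J_f)$, the cross product $\epsilon\times\rho$ is orthogonal over $\mathrm{Frac}(R)$ to each of $\epsilon$ and $\rho$ and therefore lies in the span of $(f_x,f_y,f_z)$ (the line $\{\epsilon,\rho\}^{\perp}$ when $\epsilon,\rho$ are independent, the zero vector otherwise); since $(f_x,f_y,f_z)$ is primitive, the corresponding scalar $h$ is forced to be a polynomial.

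Next I would package this as a map. The assignment $\rho\mapsto h$ is an $R$-linear map $\sigma\colon\syz(J_f)\to R$, and a degree count turns it into a degree-$0$ graded homomorphism $\sigma\colon\syz(J_f)\to R(e-d)$ whose image is $I_v(e-d)$. Its kernel is $R\epsilon$: if $\epsilon\times\rho=0$ then $\rho$ is a $\mathrm{Frac}(R)$-multiple of $\epsilon$, and since $M_\epsilon=\syz(J_f)/R\epsilon$ is torsion-free (as established at the outset of the proof of Theorem~\ref{Bourbakiideal}) this forces $\rho\in R\epsilon$. Hence $\sigma$ induces a graded degree-$0$ isomorphism $M_\epsilon\cong I_v(e-d)$. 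Comparing with Theorem~\ref{Bourbakiideal}(b)--(c), which (as $f$ is not free) supplies a graded degree-$0$ isomorphism $M_\epsilon\cong I_\epsilon(e-d)$ with $\codim I_\epsilon=2$, I conclude that $I_\epsilon$ and $I_v$ are the images of two graded degree-$0$ embeddings of the rank-one torsion-free module $M_\epsilon(d-e)$ into $R$. Composing one with the inverse of the other and passing to $\mathrm{Frac}(R)$, they differ by multiplication by a homogeneous degree-$0$ element $c=p/q\in\mathrm{Frac}(R)^{\times}$ in lowest terms, so $p\,I_\epsilon=q\,I_v$; then $q$ divides every element of $I_\epsilon$, i.e. $I_\epsilon\subseteq(q)$, and a non-unit $q$ would give $\codim I_\epsilon\le 1$, a contradiction. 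Thus $q\in k^{\times}$, $c\in k^{\times}$, and $I_\epsilon=c\,I_v=I_v$.

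The step I expect to be the real work is the middle one: matching du Plessis--Wall's colon-ideal formula with the cross-product recipe, verifying that the primitivity of $(f_x,f_y,f_z)$ keeps the quotient inside $R$, and pinning down that $\ker\sigma$ is exactly $R\epsilon$. Once $M_\epsilon$ has been exhibited concretely as the image of this graded map into $R$, the passage from an abstract isomorphism of ideals to an honest equality is formal, being entirely controlled by the codimension-two statement in Theorem~\ref{Bourbakiideal}.
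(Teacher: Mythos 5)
Your argument is essentially sound, but it takes a genuinely different route from the paper's. The paper's proof is a one-line comparison of presentations: Theorem~\ref{Bourbakiideal}(c) exhibits $I_\epsilon$ as the cokernel of the Hilbert--Burch matrix $\psi$ obtained by deleting the row corresponding to $\epsilon$ from a minimal presentation of $\syz(J_f)$, i.e.\ as a concrete ideal of maximal minors, and \cite[Proposition 2.1]{CTC-Plessis} describes $I_v$ by exactly that presentation, so the two ideals coincide on the nose. You instead re-derive the identification from scratch: you realize the du Plessis--Wall ideal via the multiplier map $\sigma(\rho)=h$, where $\epsilon\times\rho=h\,(f_x,f_y,f_z)$, check that $\sigma$ is graded of degree zero into $R(e-d)$ with kernel exactly $R\epsilon$ (the torsion-freeness of $M_\epsilon$ is used correctly here, and the primitivity $\gcd(f_x,f_y,f_z)=1$ is just the statement $\codim J_f\geq 2$ already recorded in Section~1), and then invoke a rigidity principle: two degree-zero graded embeddings of a rank-one torsion-free module into $R$, at least one of whose images has codimension two, differ by a unit scalar and hence have equal images. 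That last step is correct and is a nice way to avoid matching generators; note that it genuinely needs both the degree-zero normalization (otherwise the multiplier $c=p/q$ need not lie in $k$) and $\codim I_\epsilon=2$ (to exclude a common non-unit factor $q$), and you supply both.

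The one real soft spot --- which you flag yourself --- is the opening identification of the cross-product recipe with the ideal that du Plessis--Wall actually define after their Lemma~1.1. Since the entire content of the corollary is the matching with that specific definition from the literature, a complete proof must either verify this equivalence or, as the paper does, route through du Plessis--Wall's own Proposition~2.1, which converts their definition into the same Hilbert--Burch data that Theorem~\ref{Bourbakiideal}(c) produces for $I_\epsilon$. As written, your argument proves that $I_\epsilon$ equals the multiplier ideal of the cross-product construction; the bridge from that construction to the cited definition is asserted rather than established, and closing it is precisely the step the paper delegates to \cite[Proposition 2.1]{CTC-Plessis}.
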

		\begin{proof}
			This is an immediate consequence of the shape of $I_{\epsilon}$ via item (c) of Theorem~\ref{Bourbakiideal} and \cite[Proposition 2.1]{CTC-Plessis}.
		\end{proof}
		
		The exact sequence $0\rar R(-e) \lar {\rm Syz}(J_f) \lar I_\epsilon(e-d)  \rar 0$ is called a {\em Bourbaki sequence} of the curve $X=V(f)$, associated to the generating syzygy degree $e$.
		The ideal $I_{\epsilon}(e-d)$  is a {\it Bourbaki ideal} of of the curve $X=V(f)$. As such, it depends  on the choice of a minimal generator of a given degree.
		If $f$ is free, the obtained Bourbaki ideal is trivially $R(e-d)$, considered as an ideal.

		Recall that, for a graded module $M=\bigoplus_{i\geq 0}M_i$ over an $\NN$-graded Noetherian ring, its 
		{\it initial degree}  is defined to be $\mathrm{indeg}(M):=\min \{ i\ | \ M_i\neq 0\}.$
		Here the focus is on the case where $R$ is the standard graded polynomial ring $k[x,y,z]$

		\begin{Definition}\label{Defining_Bourbaki_degree}
			%	Let $e:=\mathrm{indeg}({\rm Syz}(J_f))$ in the standard grading.
			The {\em Bourbaki degree} of $f$ is the co-degree of the Bourbaki ideal of ${\rm Syz}(J_f)$ with respect to a minimal generator $\epsilon$ of standard initial degree.
			In other words, it is the degree (multiplicity) of the $R$-module $R/I_{\epsilon}$.
		\end{Definition}
		
		Note that the Bourbaki degree does not depend on the choice of minimal generator chosen, as long as it is of initial degree.
		Because of this independence, we denote it by $\mathrm{Bour}(f)$.
		By setting $X=V(f)$, the projective scheme $\mathrm{Proj}(R/I_\epsilon)$ is called a \textit{Bourbaki scheme} of $X$ and we will often write 
		$\mathrm{Bour}(f)=\mathrm{Bour}(X)$. 
		
		If $X$ is a free divisor, as has been seen, $I_\epsilon$ is identified with $R$. Thus, we set quite naturally   $\mathrm{Bour}(X)=0$.

		\begin{Remark}
			(1)  Theorem~\ref{Bourbakiideal} implies the following formula regardless for any reduced singular curve $X=V(f)\subset \PP^2$:
			\begin{equation}\label{formula_definite}
				\bour(X)=
				d^2+e(e-d)-\deg(R/J_f) 
			\end{equation}
			Geometrically, this formula is still kosher when $X$ is smooth, provided one thinks of $\deg(R/J_f) $ as the geometric degree of ${\rm Proj}(R/J_f)$, which is zero.

			(2)  Supposing that $V(f)$ is not smooth, let $J_f=J_f^{\rm un}\cap \mathfrak{M}$ denote a primary decomposition of the ideal $J_f$, where $J_f^{\rm un}$ is its minimal component (``un'' for {\em unmixed part}), while $\mathfrak{M}$ denotes an irrelevant component.
			Clearly,  $J_f^{\rm un}=J_f^{\rm sat}:=J_f\colon (x,y,z)^{\infty}$.
			Thus, $J_f^{\rm un}$ defines the singular locus of $V(f)$ scheme theoretically.
			Algebraically, since $\depth R/J_f^{\rm un}=1$, the ideal  $J_f^{\rm un}$ is Cohen--Macaulay, thus yielding yet another codimension two Cohen--Macaulay ideal. In particular,  by (\ref{formula_definite}), we are being told that the ubiquitous numerical expression $d^2+e(e-d)$ is the sum of the algebraic co-degrees of two such ideals.
			
			Setting $\fm=(x,y,z)$,  the cohomology module $H_{\fm}(R/J_f)=J_f^{\rm un}/J_f$ is of finite length, so eventually $[J_f^{\rm un}]_l=[J_f]_l$ for $l >> 0$.
			The least such $l$  is the {\em saturation number} of $J_f$, denoted ${\rm sat}(J_f)$.
			A nontrivial numerical relation has been established in \cite[Theorem 1.2]{Hamid-A}, to the effect that ${\rm indeg}(H_{\fm}(R/J_f))=3(d-3)- {\rm sat}(J_f)$.
			The algebraic--combinatorial side of ${\rm indeg}(H_{\fm}(R/J_f))$ has been explored in \cite{Busito}, while the more geometrically inclined facet has been considered in \cite{Hamid-A} and \cite{SimToh}, concerning a watershed bound related to plane Cremona maps, to wit, ${\rm indeg}(H_{\fm}(R/J_f))\geq d+1$. 
			It would be interesting to understand the role of this lower bound in the present case of a gradient ideal of a plane curve.
		\end{Remark}

		\subsection{Propaedeutic examples}
		As a rule, when bringing up examples, we assume that the ground field characteristic is well-behaved with respect to the essential exponents.
		
		\begin{Example}
			Let  $f=(x^2-y^2)z^{d-1} -(x^{d-1}-y^{d-1})x^2 -y^{d+1}$ ($d\geq 2$).
			
			It is clear that the point $p=(0:0:1)$ is a singular point. The local expression of $f$ at $(x,y)$ tells it is a simple node. 
			Now, $f_z=(d-1)(x^2-y^2)z^{d-2}$. Therefore, any minimal prime $\wp$ of the gradient ideal $J_f$ must contain either  $x+y,x-y$, or $z$. If $z\in \wp$, then $\wp$ must contain the respective pure $\{x,y\}$-parts of both $f_x$ and $f_y$. A closer inspection shows that these have no common factors in $k[x,y]$. Therefore, $\hht \wp=3$, which is absurd.
			If, instead, say, $x-y\in \wp$, then again upon inspection $\wp$ contains the ideal $(x-y, y^d,zy^{d-1})$. Since the radical of the latter is $(x,y)$, then $\wp=(x,y)$.
			Wrapping up, we have argued that $p=(0:0:1)$ is the unique singular point of $X=V(f)$.
			Moreover,  locally at $(x,y)$, the gradient ideal is easily seen to be generated by $x,y$.
			Therefore, the associativity formula for the degree (alternatively, the Tjurina number (see Proposition~\ref{deg_of_gradient__is_Tjurina})) gives that $\deg (R/J_f)=1$.
			
			This example is treated in more generality in Proposition~\ref{sing_is_one_node} regarding the Bourbaki degree, and hence, the emphasis is on the standard initial degree of $\syz(R/J_f)$. The latter has been established in \cite[Example 2.2 (i)]{Dimca-Sernesi}. The present example emphasizes instead the maximal generating  degree of $\syz(R/J_f)$. In virtue of the basic assumption in the Bourbaki treatment in \cite{Herzog_et_al}, this may have interest. Concerning this, we pose the following questions:
			\begin{Conjecture}
				Let $f$ be as above.
				\begin{enumerate}
					\item The maximal degree of a minimal generator of $\syz(J_f)$ is $2d-2$ (maximal possible according to \cite[Corollary 11]{Choudary-Dimca}).
					\item The minimal graded free resolution of $J_f$ is of the form
					\begin{equation}
						0 \rar R(-(3d-1))^2 \lar R(-2d)^3\oplus R(-(3d-2))\lar R(-d)^3 \lar R,
					\end{equation}
					where the generating syzygies in degree $2d$ are the Koszul syzygies.
					In particular, the regularity of $R/J_f$ is $3d$.
				\end{enumerate}
			\end{Conjecture}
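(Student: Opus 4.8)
\textit{Proof plan.}
The strategy I would follow reduces the whole conjecture to one computation: the first Koszul homology of the partial derivatives. Recall from the Example that $X=V(f)$ has a \emph{unique} singular point $p=(0:0:1)$, an ordinary node; hence the singular scheme $\mathrm{Proj}(R/J_f^{\rm sat})$ is the reduced point $p$, so $J_f^{\rm sat}=(x,y)$ and $\deg(R/J_f)=\tau(X)=1$. Consider the Koszul complex $K_\bullet=K_\bullet(f_x,f_y,f_z;R)$, with $K_p=\wedge^pR(-d)^3$. Its homology is $H_0=R/J_f$, $H_2=H_3=0$ (because $\grade J_f=2$), and $H_1=:H_1(f_x,f_y,f_z;R)$. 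The image of $K_2\to K_1$ is precisely the submodule $\mathcal K\subseteq \syz(J_f)$ generated by the three Koszul syzygies $K_{ij}=f_{x_i}e_j-f_{x_j}e_i$ (standard degree $d$), and $K_\bullet$ gives $\mathcal K$ the minimal presentation $0\to R(-2d)\to R(-d)^3\to\mathcal K\to 0$; moreover $\syz(J_f)/\mathcal K\cong H_1$. So everything comes down to identifying $H_1$.

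By the graded self-duality of the Koszul complex of three forms of degree $d$ one has $H^2(\operatorname{Hom}(K_\bullet,R))\cong H_1(3d)$; on the other hand, since $J_f$ annihilates $H_1$ one has $\grade(\operatorname{Ann}H_1)\ge 2$, and the hyper-$\operatorname{Ext}$ long exact sequence attached to the distinguished triangle $H_1[1]\to K_\bullet\to R/J_f$ collapses in the relevant spot to give $H^2(\operatorname{Hom}(K_\bullet,R))\cong\operatorname{Ext}^2_R(R/J_f,R)$. Finally, as $N(f)=J_f^{\rm sat}/J_f$ has finite length, the $\operatorname{Ext}^\bullet(-,R)$-sequence of $0\to N(f)\to R/J_f\to R/J_f^{\rm sat}\to 0$ yields $\operatorname{Ext}^2_R(R/J_f,R)\cong\operatorname{Ext}^2_R(R/(x,y),R)\cong\big(R/(x,y)\big)(2)$. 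Combining, $H_1\cong\big(R/(x,y)\big)(2-3d)=k[z](2-3d)$, a \emph{cyclic} module whose generator lies in degree $3d-2$, i.e. lifts to a syzygy $\eta$ of standard degree $2d-2$. Hence $\syz(J_f)=\mathcal K+R\eta$ is generated in degrees $d$ and $2d-2$; in particular it has no generator below degree $d$ (recovering the initial degree of \cite[Example 2.2(i)]{Dimca-Sernesi}) and its maximal generating degree is exactly $2d-2$, the bound of \cite[Corollary 11]{Choudary-Dimca} — this is part (1). A one-line graded-Nakayama argument (the $K_{ij}$ are $k$-independent in degree $d$ since no two of $f_x,f_y,f_z$ are proportional, and $\bar\eta\ne 0$ in $\syz(J_f)\otimes k$ lest $H_1=0$) shows these four elements form a \emph{minimal} generating set, so $\mu(\syz(J_f))=4$ with generator degrees $d,d,d,2d-2$, the degree-$d$ ones being the Koszul syzygies.

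For part (2): being reflexive of rank two over $k[x,y,z]$, $\syz(J_f)$ has projective dimension $\le 1$, so its minimal free resolution is $0\to F_1\to R(-d)^3\oplus R(-(2d-2))\to\syz(J_f)\to 0$ with $F_1$ free of rank two (no cancellation, since $\mu=4>2$). The shifts of $F_1$ are now forced: from $0\to\mathcal T_f\to\mathcal O^3\to\mathcal J_f(d)\to 0$ and $\tau=1$ one computes $c_1(\mathcal T_f)=-d$ and $c_2(\mathcal T_f)=d^2-1$, and matching these against $c(\widetilde F_0)/c(\widetilde F_1)$ gives $F_1=R(-(2d-1))^2$ (equivalently, one may build a non-minimal resolution of $\syz(J_f)$ by the horseshoe lemma from the presentations of $\mathcal K$ and of $H_1$ and cancel the single redundant summand $R(-2d)$ forced by $\operatorname{pd}\le 1$). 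Splicing $0\to R(-(2d-1))^2\to R(-d)^3\oplus R(-(2d-2))\to\syz(J_f)\to 0$ with \eqref{shifted_presentation} and shifting back by $d$ produces the minimal resolution
\[
0\rar R(-(3d-1))^2\lar R(-2d)^3\oplus R(-(3d-2))\lar R(-d)^3\lar R\lar R/J_f\rar 0,
\]
with the degree-$2d$ generating syzygies being the Koszul ones, from which the Betti numbers and the regularity of $R/J_f$ are read off directly (one should recheck the precise value of $\operatorname{reg}(R/J_f)$ against the exponents appearing in this resolution).

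\textit{Where the difficulty lies.} On this route there is essentially no obstacle beyond the identification of $H_1$, which is purely homological and uses only that the singular scheme is a single reduced point; so the argument in fact establishes parts (1)–(2) for \emph{every} reduced plane curve whose only singularity is an ordinary node. The one place where the explicit equation $f$ genuinely enters is the input, already contained in the Example, that $p$ is the \emph{unique} singular point and that it is a node. The alternative — directly exhibiting $\eta$ from the shape of $f$ (the Euler relation $xf_x+yf_y+zf_z=(d+1)f$ together with $x\mid f_x$ and $(x^2-y^2)\mid f_z$ make a degree-$(2d-2)$ combination natural) and then verifying $\langle K_{12},K_{13},K_{23},\eta\rangle=\syz(J_f)$ by hand — is the laborious path that the Koszul-homology computation is designed to avoid.
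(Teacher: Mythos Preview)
Your argument is essentially correct and, remarkably, goes well beyond what the paper does: the paper offers \emph{no} proof of this statement --- it is posed as a Conjecture and is only ``verified affirmatively, with computer assistance, for all sufficiently large values of $d$''. So there is nothing to compare against; you have supplied a proof where the authors had none.

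Let me nonetheless record a few comments on the argument itself. The crucial identification $H_1(f_x,f_y,f_z;R)\cong\Ext^2_R(R/J_f,R)(-3d)$ via Koszul self-duality and the truncation triangle is sound; the point where you say the long exact sequence ``collapses in the relevant spot'' deserves one more word: it collapses because $\Ext^i_R(H_1,R)=0$ for $i\le 1$, which follows from $\grade(\Ann H_1)\ge 2$ (pick a regular sequence $a,b\in J_f\subseteq\Ann H_1$; then $\Hom_R(H_1,R)=0$ since $H_1$ is torsion, and the long exact sequence of $0\to R\xrightarrow{a}R\to R/(a)\to 0$ together with $\Hom_R(H_1,R/(a))=0$ forces $\Ext^1_R(H_1,R)=0$). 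Your degree bookkeeping in the Tor argument (the connecting map $\Tor_1^R(H_1,k)\to\mathcal K\otimes k$ vanishes because $3d-1\neq 2d$ for $d\ge 2$) and the horseshoe-plus-cancellation step are both clean; the alternative Chern-class computation is unnecessary once the horseshoe argument is in place.

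One genuine discrepancy with the statement as printed: reading regularity off the resolution you obtain gives
\[
\reg(R/J_f)=\max\{(3d-1)-3,\ (3d-2)-2,\ 2d-2,\ d-1,\ 0\}=3d-4\qquad(d\ge 2),
\]
not $3d$. You were right to flag this; either the conjecture contains a typo in the ``In particular'' clause or a nonstandard normalization is intended. Finally, your closing observation is worth underscoring: the proof uses nothing about the specific polynomial beyond the input ``unique singular point, ordinary node'', so it establishes parts (1)--(2) for every reduced plane curve with that property, not just the displayed family.
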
 
			The degree of the Bourbaki ideal with respect to the syzygy of (standard) degree $2d-2$ is $3(d-1)^2$, much larger than the Bourbaki degree as introduced here, which is $d^2-1$ (they coincide if and only if $f$ is cubic).
			Concerning  the issue of an upper bound for degrees of minimal generating syzygies, 
			\cite[Proposition 6]{Lazard}, has a general bound for the degree of generating syzygies of homogeneous matrices -- in the case of equigenerated ideals in $k[x,y,z]$ it amounts to $3/2(d^2+d-2)$. In higher dimension an upper bound was established for the syzygies of the gradient ideal in \cite[Corollary 11]{Choudary-Dimca} in the case of isolated singularities.
			
			The conjecture has been verified affirmatively, with computer assistance, for all sufficiently large values of $d$.
			
		\end{Example}

		\begin{Example}
			Let $X\subseteq\PP_k^2$ be the quintic curve defined by the polynomial $f=x^5+x^4y+x^3z^2+y^2z^3$. A computation with \cite{DGPS} shows that $\syz (J_f)$  is minimally generated by two syzygies ${\epsilon}_1$, and ${\epsilon}_2$ of standard degree $3$ and two additional ones of standard degree $4$. Moreover,  $\syz(J_f)$ has the following minimal graded resolution
			\[0\rar R^2(-5)\stackrel{\begin{bmatrix}
					4y^2&3x^2-10yz\\
					-6z&-9y\\
					-x&-2z\\
					-3z^2&-10xz-8yz
			\end{bmatrix}}{\lar} R(-3)\oplus (R(-3) \oplus R^2(-4))\rar \syz(J_f)\rar 0,\]	 
			where, say, $\epsilon_1$ is chosen as the first term in an ordered set of minimal generators of  $ \syz(J_f)$.
			By Theorem~\ref{Bourbakiideal}, the minimal graded resolution of the Bourbaki ideal $I_{{\epsilon}_1}$ is 
			\[0\rar R^2(-4)\stackrel{\begin{bmatrix}
					-6z&-9y\\
					-x&-2z\\
					3z^2&-10xz-8yz
			\end{bmatrix}}{\lar} R^2(-3)\oplus R(-2)\rar I_{{\epsilon}_1}\rar 0.\]
			By the above exact sequence,  the Hilbert Series of $R/I_{{\epsilon}_1}$ is 
			\[\mathrm{HS}_{R/I_{{\epsilon}_1}}(t)=\dfrac{1-t^2-2t^3+2t^4}{(1-t)^3}=\dfrac{1+2t+2t^2}{(1-t)}.\]
			Therefore, $\mathrm{Bour}(X)=1+2+2=5$ and $\deg(R/J_f)=8$.
			
			Note that, had we chosen $\epsilon_2$ instead, the resulting Bourbaki ideal $I_{\epsilon_2}$ would have the same Betti numbers and shifts, hence the same value for the Bourbaki degree.
		\end{Example}
		\begin{Example}
			Let $X=V(f)$ be a  sextic curve  defined by $f=xy^4z+x^6+y^6$.  A computation with \cite{DGPS} shows that $\syz(J_f)$ is generated by the column vectors of the matrix 
			\[\begin{bmatrix}
				0&       -6y^3z&      -6y^4\\
				xy&      9x^4+y^2z^2&   y^3z\\
				-6y^2-4xz&-54x^3y-4yz^3&36x^4-4y^2z^2
			\end{bmatrix}
			\]
			with graded minimal resolution
			\[0\rar R(-5)\stackrel{\begin{bmatrix}
					9x^3\\
					-y\\
					z
			\end{bmatrix}}{\lar}R(-2) \oplus R^2(-4)\rar \syz(J_f)\rar 0.\]
			Choosing $\epsilon$ as the unique column vector of degree $2$,
			by Theorem~\ref{Bourbakiideal} the minimal graded resolution of $I_{\epsilon}$ is 
			\[0\rar R(-2)\stackrel{\begin{bmatrix}
					z\\
					-y\\
			\end{bmatrix}}{\lar} R^2(-1)\rar I_{\epsilon}\rar 0. \]
			Therefore, the Bourbaki scheme $\proj{R/I_{\epsilon}}$ is a reduced and irreducible complete intersection of degree one, hence 
			$\mathrm{Bour}(X)=1$ and $\deg(R/J_f)=18$. 
			
			This is an example falling within a large class of curves introduced by Dimca and co-authors under the name {\em nearly free curves} (\cite{DimcaFreenessTjurina}) -- see Proposition~\ref{Bour_for_nearlyfree} for further details.
			It is worth noting that the Bourbaki scheme associated with the other syzygies is not even reduced and has degree $3$, pointing to the fact that the Bourbaki invariants considered in \cite{Herzog_et_al} may be different and possibly less tight.  
		\end{Example}
		
		\subsection{A distinguished upper bound}
		
		We give an upper bound for the Bourbaki degree solely in terms of the initial degree of the syzygies of the gradient ideal of a curve.

		\begin{Theorem}\label{loweBour-number}
			Let $X=V(f)$ be a singular curve of degree $d+1$ and let $e=\mathrm{indeg}(\syz(J_f))$. Then 
			\begin{equation}\label{bounds}
				\bour(X)\leq e^2.	
			\end{equation}
		\end{Theorem}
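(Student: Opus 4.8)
\emph{Strategy.} By Theorem~\ref{Bourbakiideal}, $\bour(X)=\deg(R/I_\epsilon)=\mathfrak{e}_0(R/I_\epsilon)$, so it suffices to bound the multiplicity of $R/I_\epsilon$. The plan is to squeeze a complete intersection of two forms of degree $e$ inside $I_\epsilon$, namely via the chain $(g_1,g_2)\subseteq H\subseteq I_\epsilon$, where $H$ is the ideal generated by the coordinates of a minimal generating syzygy $\epsilon$ of minimal degree; a degree comparison along this chain then yields $\bour(X)\leq e^2$. If $f$ is a free divisor then $\bour(X)=0$ and there is nothing to prove, so assume $f$ is not free. Pick $\epsilon=(a,b,c)\in\syz(J_f)$ a minimal generator of the initial degree $e$, so $a,b,c\in R_e$ and $af_x+bf_y+cf_z=0$; by Theorem~\ref{Bourbakiideal}(b), $I_\epsilon\subset R$ is a codimension-two perfect ideal. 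Observe that $\gcd(a,b,c)=1$: a common factor $h$ would produce the syzygy $h^{-1}\epsilon$ of strictly smaller degree, contradicting minimality of $e$. In particular $H:=(a,b,c)$ has codimension at least two.

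\emph{The inclusion $H\subseteq I_\epsilon$.} Define an $R$-linear ``cross-product'' map $\phi\colon\syz(J_f)\to R$ by the rule $\epsilon\times v=\phi(v)\,\nabla f$, where $\nabla f=(f_x,f_y,f_z)$: this is well defined because $\epsilon\times v$ and $\nabla f$ are both orthogonal to $\epsilon$ and to $v$, hence proportional over the fraction field, and the proportionality factor is forced to lie in $R$ since $\gcd(f_x,f_y,f_z)=1$ (as $f$ is reduced). Coprimality of $a,b,c$ gives $\ker\phi=R\epsilon$, so $\phi$ induces an injection $M_\epsilon=\syz(J_f)/R\epsilon\hookrightarrow R$; a degree count identifies this with the embedding $M_\epsilon\simeq I_\epsilon(e-d)$ of Theorem~\ref{Bourbakiideal}(b), so $\mathrm{image}(\phi)=I_\epsilon$ exactly. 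Evaluating $\phi$ on the three Koszul syzygies $(0,f_z,-f_y)$, $(-f_z,0,f_x)$, $(f_y,-f_x,0)$ returns $a$, $b$, $c$ respectively (using $af_x+bf_y+cf_z=0$), hence $a,b,c\in I_\epsilon$, i.e. $H\subseteq I_\epsilon$. In particular $\codim H=2$, being squeezed between the two bounds above; this is the inclusion $\sqrt H\subseteq\sqrt{I_\epsilon}$ alluded to in the Introduction, and it may well be strict.

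\emph{The degree comparison.} Since $\codim H=2$ and $k$ is infinite, prime avoidance over the finitely many height-one primes associated to a general principal subideal of $H$ lets us choose two $k$-linear combinations $g_1,g_2$ of $a,b,c$ forming an $R$-regular sequence; both have degree $e$, so $(g_1,g_2)$ is a complete intersection with $\mathfrak{e}_0\!\big(R/(g_1,g_2)\big)=e^2$. Because $(g_1,g_2)\subseteq H\subseteq I_\epsilon$, and $R/(g_1,g_2)$ and $R/I_\epsilon$ both have Krull dimension one, the surjection $R/(g_1,g_2)\twoheadrightarrow R/I_\epsilon$ forces $\mathfrak{e}_0(R/I_\epsilon)\leq\mathfrak{e}_0\!\big(R/(g_1,g_2)\big)=e^2$. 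Therefore $\bour(X)=\mathfrak{e}_0(R/I_\epsilon)\leq e^2$, as claimed.

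\emph{Main obstacle.} The crux is the second step. A merely set-theoretic containment $V(I_\epsilon)\subseteq V(H)$ would be useless for bounding $\mathfrak{e}_0(R/I_\epsilon)$, since local multiplicities at a common point could be much larger on the $I_\epsilon$-side; what makes the argument work is the \emph{honest} ideal inclusion $(g_1,g_2)\subseteq I_\epsilon$, which is exactly what the explicit identification $I_\epsilon=\mathrm{image}(\phi)$ provides. The one delicate bookkeeping point is matching the internal degree shift of the embedding produced by $\phi$ with the normalization $M_\epsilon\simeq I_\epsilon(e-d)$ fixed in Theorem~\ref{Bourbakiideal}, so that the image is genuinely $I_\epsilon$ rather than just an abstractly isomorphic copy.
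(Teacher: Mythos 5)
Your proof is correct, but it reaches the bound by a genuinely different route than the paper. The paper dualizes the Bourbaki sequence into $R$, identifies $\Ext^1(R^3/\tilde{\epsilon}(R(-e)),R)\simeq (R/H)(e)$, and then works \emph{locally} at the minimal primes of $I_\epsilon$: using that $\Ext^2(J_f,R)$ vanishes off the irrelevant ideal and that $I_\epsilon$ is generically a complete intersection, it produces surjections $(R/H)_\wp \surjects (R/I_\epsilon)_\wp$ (up to shifts) at each such prime, and concludes $\deg(R/I_\epsilon)\leq \deg(R/H)\leq e^2$ by comparing local lengths through the associativity formula. In particular the paper only ever obtains $\Min(R/I_\epsilon)\subseteq\Min(R/H)$, i.e.\ $\sqrt{H}\subseteq\sqrt{I_\epsilon}$. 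You instead realize $I_\epsilon$ explicitly as the image of the cross-product map $v\mapsto\lambda$, $\epsilon\times v=\lambda\nabla f$ (well defined since $\gcd(f_x,f_y,f_z)=1$ for a reduced non-cone $f$, with kernel $R\epsilon$ since $\gcd(a,b,c)=1$ in the initial degree), evaluate it on the three Koszul syzygies to get the \emph{honest global inclusion} $H\subseteq I_\epsilon$, and then dominate $\deg(R/I_\epsilon)$ by the degree $e^2$ of a complete intersection of two general degree-$e$ combinations of $a,b,c$. Your argument is more elementary (no $\Ext$, no local analysis) and proves strictly more than the paper's Claim 2, refining the radical inclusion to an ideal inclusion; it is essentially the du~Plessis--Wall/Dimca--Sticlaru realization of the Bourbaki ideal, which the paper cites but deliberately avoids in favor of a purely homological argument that would adapt more readily to higher-dimensional settings. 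The only point to tighten is the identification $\mathrm{image}(\phi)=I_\epsilon$: rather than a "degree count," simply note that two ideals of codimension at least two in the UFD $R$ that are isomorphic as $R$-modules must coincide (an isomorphism is multiplication by a fraction $p/q$, and codimension $\geq 2$ on both sides forces $p,q$ to be units), so the image of $\phi$ \emph{is} the Bourbaki ideal of Theorem~\ref{Bourbakiideal}(b) and not merely an isomorphic copy.
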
	
		\begin{proof}

			Dualize into $R$ the lowest horizontal exact sequence in~\eqref{diag4}, with $M_{\epsilon}=I_{\epsilon}(e-d)$. Observing that $\Ext^2(R^3/\tilde{\epsilon}(R(-e)),\, R)=0$ because $R^3/\tilde{\epsilon}(R(-e))$ has homological dimension one, entails the short exact sequence 
			\begin{equation}\label{degreeExt1}
				\Ext^1(R^3/\tilde{\epsilon}(R(-e)),\, R)\lar\Ext^1(I_{\epsilon}(e-d),R)\lar \Ext^2(J_f,R)\rar 0.	
			\end{equation}
			%By Lemma~\ref{{degree_CI}}, one has $\Ext^1(I_{\epsilon}(e-d),R)\simeq 
			Since $J_f$ is Cohen--Macaulay locally on the punctured spectrum, $\Ext^2(J_f,R)$ vanishes locally on the punctured spectrum. 
			
			\smallskip
			
			{\bf Claim 1:} $\Ext^1(R^3/\tilde{\epsilon}(R(-e)),\, R)\simeq (R/H)(e)$, where $H\subset R$ is the ideal generated by the coordinates of $\epsilon$ regarded as column vector.
			
			\smallskip
			
			To see this, dualize into $R$ the mid vertical exact sequence in~\eqref{diag4} to get the exact sequence
			\[0\rar (R^3/\tilde{\epsilon}(R(-e)))^*\rar  R^3\stackrel{\tilde{\epsilon}^t}{\rar} R(e)\rar \Ext^1(R^3/\tilde{\epsilon}(R(-e)),\, R)\rar 0,\]
			where the image of the map  $\tilde{\epsilon}^t$ is $H(e)$, with $H$ as explained.  Thus, we obtain the following short exact sequence
			\[0\rar H(e) \rar R(e)\rar \Ext^1(R^3/\tilde{\epsilon}(R(-e)),\,R)\rar 0, \]
			giving $\Ext^1(R^3/\tilde{\epsilon}(R(-e)),\, R)\simeq (R/H)(e)$ canonically. 
			
			So much for the claim.
			We then have:
			
			\smallskip
			
			{\bf Claim 2:} Every minimal prime of $R/I_{\epsilon}$ is a minimal prime of $R/H$, and moreover, for any such prime one has a surjective map \begin{equation}\label{surjection}
				(R/H)(e)_{\wp} \surjects R/I_{\epsilon}(e-d)_{\wp}.
			\end{equation}

			\smallskip

			Indeed, substituting the result of Claim 1 in (\ref{degreeExt1}) and localizing in the punctured spectrum, we obtain surjective maps 
			$$	(R/H)(e)_{\wp} \surjects \Ext^1(I_{\epsilon}(e-d),R)_{\wp}$$
			throughout all primes $\wp\neq \fm$.
			In particular, this holds if $\wp$ is a minimal prime of $R/I_{\epsilon}$. In addition, it forces any such prime to be a minimal prime of $R/H$; in fact, it lies on the support of $R/I_{\epsilon}$, while similarly,  the support of $R/H$ in codimension two is the set of minimal primes of $R/H$.
			
			Next, recall that the module $\syz(R/J_f)$ is free of rank two locally on the punctured spectrum. Therefore, the Bourbaki sequence implies that  the Bourbaki ideal $I_{\epsilon}(e-d)$ is generically a complete intersection (of two generators).
			Then, as is well-known,  $\Ext^1(I_{\epsilon}(e-d),R)_{\wp}\simeq R/I_{\epsilon}(e-d)_{\wp}$ for every minimal prime of the latter.
			
			This takes care of Claim 2.
			
			Now, note the length inequality from (\ref{surjection}) 
			$$	\ell ((R/H)(e)_{\wp}) \geq  \ell (R/I_{\epsilon}(e-d)_{\wp}).$$
			As a result, we get $\deg(R/I_{\epsilon}(e-d)) \leq \deg (R/H)$ via
			the associativity formula for degrees.
			The stated bound now follows since $\bour(X)=\deg(R/I_{\epsilon}(e-d)) $ and $\deg(R/H)\leq e^2$ because $H$ is generated by three forms of degree $e$.
		\end{proof}

		%Thus, by the associativity formula, it obtains
		%\begin{eqnarray*}
		%\deg(R/I_{\epsilon}(e-d)) &=& \deg (\Ext^1(I_{\epsilon}(e-d),R)), \quad\text{\rm by %Lemma~\ref{degree_CI}}\\
		%	&=& \sum_{\fp \in \Min(R/I_{\epsilon})} \ell (\Ext^1(I_{\epsilon}(e-d),R)_{\fp}) \,% \deg (R/\fp) , \; \text{\rm by \cite[Lemma 1.2]{AndrSimVasc}}\\ [3pt]
		%	&\leq & \sum_{\fp \in \Min(R/I_{\epsilon})} \ell ((R/H)(e)_{\fp}) \, \deg (R/\fp), \; \text{\rm by (\ref{surjection})} \\
		%	&\leq & 	\sum_{\fp \in \Min(R/H)} \ell ((R/H)(e)_{\fp}) \, \deg (R/\fp),  \; \text{\rm by Lemma~\ref{minimal_primes}}\\
		%	&=& \deg (R/H).
		%\end{eqnarray*}

		\begin{Corollary}{\rm \cite[Corollary 1.4 (ii)]{DimcaFreenessTjurina}}
			Notation as above. If $e=1$ then the curve $X$ is either a free divisor or a nearly free curve.
		\end{Corollary}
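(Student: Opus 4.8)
The plan is to combine the inequality of Theorem~\ref{loweBour-number} with the structure theorem Theorem~\ref{Bourbakiideal}. Since $e=1$, the bound $\bour(X)\le e^2$ forces $\bour(X)\in\{0,1\}$, and I would show that these two values pick out, respectively, the free and the nearly free case. The case $\bour(X)=0$ is quick: if $f$ were \emph{not} a free divisor, then Theorem~\ref{Bourbakiideal}(b) realizes the Bourbaki module as a proper homogeneous ideal $I_\epsilon\subsetneq R$ of codimension two, so $R/I_\epsilon$ is a nonzero graded ring of dimension one and hence $\deg(R/I_\epsilon)$ is a positive integer — contradicting $\bour(X)=\deg(R/I_\epsilon)=0$. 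So $\bour(X)=0$ forces $f$ to be free.

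It remains to handle $\bour(X)=1$, in which case $f$ is not free (a free divisor has Bourbaki degree $0$), so by Theorem~\ref{Bourbakiideal}(b),(c) the ideal $I_\epsilon$ is a codimension-two \emph{perfect} ideal with $\deg(R/I_\epsilon)=1$. The key step I would carry out here is to pin down $I_\epsilon$ exactly, namely $I_\epsilon=(\ell_1,\ell_2)$ for two linearly independent linear forms. This follows from a short Hilbert-function computation: since $R/I_\epsilon$ is Cohen--Macaulay of dimension one, its $h$-vector equals the Hilbert function of an Artinian reduction (available because $k$ is infinite), hence is nonnegative, begins with $h_0=1$, and sums to $\deg(R/I_\epsilon)=1$; therefore $h=(1)$ and $\mathrm{HS}_{R/I_\epsilon}(t)=(1-t)^{-1}$. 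This forces $\dim_k(I_\epsilon)_1=2$, so $I_\epsilon$ is the complete intersection generated by a basis of $(I_\epsilon)_1$, with Koszul resolution $0\rar R(-2)\rar R(-1)^2\rar I_\epsilon\rar 0$. (Equivalently: a codimension-two perfect ideal, hence saturated, cutting out a degree-one curve in $\PP^2$ is the homogeneous ideal of a line.)

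Finally I would feed this back through Theorem~\ref{Bourbakiideal}(c) with $e=1$: matching the Koszul resolution of $I_\epsilon$ against the prescribed shape $0\rar F_1(d-1)\rar F_0(d-1)\rar I_\epsilon\rar 0$ and using uniqueness of minimal graded free resolutions pins down $F_0=R(-d)^2$ and $F_1=R(-(d+1))$. Substituting into \eqref{resolution_of_syz} then yields the minimal graded free resolution
\[
0\rar R(-(d+1))\rar R(-1)\oplus R(-d)^2\rar \syz(J_f)\rar 0 ,
\]
which is precisely the minimal free resolution defining a nearly free curve (with $d_1=1$, $d_2=d$) in the sense of \cite{DimcaFreenessTjurina}; equivalently, the Bourbaki scheme of $X$ is a line. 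I do not expect a serious obstacle here: once Theorems~\ref{loweBour-number} and \ref{Bourbakiideal} are in hand, the only genuine content is the $h$-vector observation identifying $I_\epsilon$, and the one thing that needs care is keeping the several grading/shift conventions consistent — those in Theorem~\ref{Bourbakiideal}(c), in the Koszul complex of $I_\epsilon$, and in the definition of a nearly free curve — so that the resolution above is correctly read off.
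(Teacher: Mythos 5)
Your proof is correct and follows essentially the same route as the paper: the bound $\bour(X)\le e^2=1$ from Theorem~\ref{loweBour-number} reduces everything to the values $0$ and $1$, and your treatment of the case $\bour(X)=1$ is exactly the argument the paper defers to Proposition~\ref{Bour_for_nearlyfree} (a degree-one codimension-two perfect ideal is a complete intersection of two linear forms, then feed the Koszul resolution back through Theorem~\ref{Bourbakiideal}(c)). The only nit: $\mathrm{Proj}(R/I_\epsilon)$ is a degree-one zero-dimensional scheme, i.e.\ a reduced point, not a ``line''; your $h$-vector computation already says the right thing, so this parenthetical slip is harmless.
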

		\begin{proof}
			By Theorem~\ref{loweBour-number}, $\bour(X)=0$ or $1$. As accounted for before, these values characterize the two types of curves in the statement.
		\end{proof}

		\begin{Example}
			Perhaps the simplest example where the above upper bound is attained is the higher cusp singularity defined by the polynomial $f=y^{d}z+x^{d+1}$. 
			One has $J_f=(x^d,y^{d-1}z, y^d)$.
			An application of the Buchsbaum--Eisenbud acyclicity criterion yields the minimal resolution
			\[ 0\rar R(-2d-1) \stackrel{\phi_2}{\lar} R(-d-1)\oplus R^2(-2d)\stackrel{\phi_1}{\lar} R^3(-d)\rar R\rar R/J_f\rar 0, \]
			where $\phi_1$ and $\phi_2$ are respectively defined by the matrices
			$$\begin{bmatrix}
				0&-y^d &-y^{d-1}z\\
				y& 0    & x^d\\
				z&x^d  &0
			\end{bmatrix} \: \text{\rm and}\: \begin{bmatrix}
				x^d\\
				z\\
				-y
			\end{bmatrix}.$$
			By Theorem~\ref{Bourbakiideal} (c), it follows that the Bourbaki ideal corresponding to the initial degree ($=1$) is generated by $y,z$, hence $\bour(X)=1$.
			We note that cusp singularities are a special case of  nearly free curves. In fact, as we will show in Proposition~\ref{Bour_for_nearlyfree}, the value $\bour(X)=1$ characterizes such curves.
			This example is actually a member of  the curve family studied in \cite[Example and Theorem 3.5]{Simis-triply}. In the latter the above bound is again attained for singular members and, moreover, the minimal syzygy degree $e$ is quite arbitrary.
		\end{Example}	
		
		\begin{Example}
			By Claim 2 in the proof of the above theorem, one has an inclusion of radicals $\sqrt{H}\subset \sqrt{I_{\epsilon}(e-d)}$. The nature of this inclusion is not independent of the choice of $\epsilon$. The following example illustrates its behavior: 
			$$f=x^2y^2+x^2z^2+y^2z^2+2xyz ((1/2)x+y+z).$$
			A run of \cite{DGPS} yields that $\syz(R/J_f)$ admits two independent minimal generators in the initial degree $2$ such that, for one of them, the above inclusion of radicals is strict, while for the other one, it is an equality.
		\end{Example}
		
		\subsection{Special results}
		In this part, we collect a few results for the Bourbaki degree of certain classes of curves.
		
		The following result characterizes smooth curves in terms of the Bourbaki degree. 
		\begin{Proposition}\label{smooth}
			Let $X$ be a reduced curve of degree $d+1\geq 2$. Then $\mathrm{Bour}(X)=d^2$ if and only if $X$ is smooth.  
		\end{Proposition}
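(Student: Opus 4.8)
The plan is to lean on the master formula~(\ref{formula_definite}), valid for every reduced singular curve, which reads $\bour(X)=d^2+e(e-d)-\deg(R/J_f)$ with $e=\mathrm{indeg}(\syz(J_f))$ (and which, read geometrically with $\deg(R/J_f)=0$, also holds when $X$ is smooth). Everything then reduces to two elementary facts about the initial syzygy degree $e$: that $e=d$ when $X$ is smooth, and that $e\le d$ together with $\deg(R/J_f)\ge 1$ whenever $X$ is singular.

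For the ``if'' direction, assume $X$ smooth. Then $J_f$ has no projective zeros, so it is $\fm$-primary; hence $f_x,f_y,f_z$ form a system of parameters of the three-dimensional Cohen--Macaulay ring $R$, and therefore a regular sequence. Consequently the Koszul complex on $f_x,f_y,f_z$ is the minimal graded free resolution of $R/J_f$, so $\syz(J_f)$ is minimally generated by the three Koszul syzygies, each of standard degree $d$ (the common degree of the partials). Thus $e=d$. A smooth curve of degree $\ge 2$ is not a free divisor, so either Theorem~\ref{Bourbakiideal}(b) in its smooth case, or the geometric reading of~(\ref{formula_definite}), gives $\bour(X)=d^2+e(e-d)=d^2+d\cdot 0=d^2$.

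For the ``only if'' direction I prove the contrapositive: if $X$ is singular then $\bour(X)\le d^2-1<d^2$. First, $\deg(R/J_f)\ge 1$, since $\mathrm{Proj}(R/J_f)$ is the nonempty singular scheme of $X$ (equivalently, $\deg(R/J_f)$ is the total Tjurina number, which is positive for a singular curve). Second, since $f$ is reduced with $\deg f=d+1\ge 2$ it is not of the form $c\,x_i^{d+1}$, so at least two of the partials, say $f_x$ and $f_y$, are nonzero; then the Koszul syzygy with coordinate vector $(f_y,-f_x,0)$ is a nonzero element of $\syz(J_f)$ of standard degree $d$, whence $\syz(J_f)_d\neq 0$ and therefore $e\le d$. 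Feeding these two inequalities into~(\ref{formula_definite}) gives $\bour(X)=d^2+e(e-d)-\deg(R/J_f)\le d^2+0-1$, as desired. The same estimate covers free divisors, where $e\le d/2$ and~(\ref{formula_definite}) returns $0$, so no separate case distinction is needed.

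No step presents a genuine obstacle; the only point deserving care is the bound $e\le d$, which rests on the observation that the Koszul syzygies among the partials already sit in standard degree $d$ and, by reducedness, do not all vanish, so the least degree of a minimal generator of $\syz(J_f)$ cannot exceed $d$. It is also worth recording explicitly that formula~(\ref{formula_definite}) is consistent with the convention $\bour(X)=0$ on free divisors, which is what allows the singular case to be treated uniformly.
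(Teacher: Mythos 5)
Your proof is correct and follows essentially the same route as the paper: both directions rest on the formula $\bour(X)=d^2+e(e-d)-\deg(R/J_f)$, with $e=d$ for smooth curves coming from the Koszul resolution, and the converse coming from $e\le d$ together with $\deg(R/J_f)\ge 1$ in the singular case (the paper phrases this as a contradiction, you as a contrapositive, which is the same computation). Your explicit justification of $e\le d$ via a nonzero Koszul syzygy of standard degree $d$ is a welcome extra detail that the paper leaves implicit.
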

		\begin{proof}
			If $X$ is smooth, then $J_f$ is generated by an $R$-sequence, hence ${\rm Syz}(f)$ is generated by the Koszul relations. Therefore, $e=d$.
			In this case, the result follows from   \eqref{degree_of_Bourbaki_ideal}.
			Conversely, let $\mathrm{Bour}(X)=d^2$ and assume that $X$ is singular.
			Then \eqref{degree_of_Bourbaki_ideal} yields $\deg(R/J_f)=e(e-d)$. Since $e$ is the initial  degree of ${\rm Syz}(J_f) $ then $e\leq d$, hence  $\deg(R/J_f)\leq 0$, which is impossible with $\dim R/J_f=1$.
		\end{proof}

		In the non-smooth case, if $f$ irreducible, one can add:
		\begin{Proposition}
			Let $f\in R=k[x,y,z]$ be a homogeneous polynomial of degree $d+1\geq 2$ and let $e:={\rm indeg}(\syz(R/J_f))$. Supposing that $f$ is irreducible, one has:
			\begin{enumerate}
				\item[\rm (i)] If $f$ is not a free divisor then $\bour(X)\geq e(e-d)+d$.
				\item[\rm (ii)] If $f$ is a free divisor then $e\geq 2$ and $d\geq 4;$ in particular, there are no irreducible free divisors of degree $2\leq d+1\leq 4$. Moreover, if $d+1=5$ then $e=2$.		
				\item[\rm (iii)] If $d\geq 2$ and $\bour(X)=1$ then $e<d$.
			\end{enumerate}
		\end{Proposition}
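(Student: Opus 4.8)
The whole statement is a formal consequence of the master formula of Theorem~\ref{Bourbakiideal}, namely $\bour(X)=d^2+e(e-d)-\deg(R/J_f)$ (valid for every singular $f$, see \eqref{formula_definite}), combined with a single input drawn from irreducibility: a uniform bound on $\deg(R/J_f)=\tau(X)=\sum_p\tau_p$ (Proposition~\ref{deg_of_gradient__is_Tjurina}). For an irreducible plane curve of degree $d+1$ the arithmetic genus is $\binom{d}{2}$, so $\sum_p\delta_p\le\binom{d}{2}$; together with the local relations $\tau_p\le\mu_p=2\delta_p-(r_p-1)\le 2\delta_p$ over all singular points $p$ this gives
\[
\deg(R/J_f)=\tau(X)\le\mu(X)\le 2\sum_p\delta_p\le d(d-1).
\]
I would record this bound first, since it is the only place irreducibility is used. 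Part (i) is then immediate: as $f$ is not free, \eqref{formula_definite} applies and $\bour(X)=d^2+e(e-d)-\deg(R/J_f)\ge d^2+e(e-d)-d(d-1)=e(e-d)+d$.

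For part (ii) I would invoke Theorem~\ref{Bourbakiideal}(a): a free divisor has $\syz(J_f)\cong R(-e)\oplus R(-(d-e))$ and $\deg(R/J_f)=d^2+e(e-d)=d^2-e(d-e)$, where $e=\mathrm{indeg}(\syz(J_f))$ is the smaller of the two positive exponents $e$ and $d-e$ (positivity because a degree-zero syzygy would be a nontrivial $k$-linear relation among $f_x,f_y,f_z$, excluded since $f$ is not a cone). Plugging $\deg(R/J_f)=d^2-e(d-e)$ into $\deg(R/J_f)\le d(d-1)$ and simplifying yields $e(d-e)\ge e+(d-e)$; for $e=1$ this reads $d-1\ge d$, which is absurd, so $e\ge 2$ and therefore $d\ge 2e\ge 4$. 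Hence there is no irreducible free divisor of degree $d+1\in\{2,3,4\}$, and when $d+1=5$ the constraints $2\le e\le d-e=4-e$ force $e=2$.

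For part (iii), note that $\bour(X)=1$ excludes both the smooth case ($\bour(X)=d^2\ge 4$ for $d\ge 2$) and the free case ($\bour(X)=0$), so $f$ is singular and non-free and \eqref{formula_definite} applies; moreover $e\le d$ always, since a Koszul syzygy of $f_x,f_y,f_z$ is a nonzero element of $\syz(J_f)$ of standard degree $d$ ($f$ not being a cone). If $e=d$ then $\bour(X)=d^2-\deg(R/J_f)=1$ forces $\deg(R/J_f)=d^2-1$, contradicting $\deg(R/J_f)\le d(d-1)=d^2-d$ for $d\ge 2$; hence $e<d$. The only step that is not pure bookkeeping with the formula of Theorem~\ref{Bourbakiideal} is the inequality $\deg(R/J_f)\le d(d-1)$ — the arithmetic-genus and $\delta$-invariant estimate valid precisely for irreducible curves — and that is where I would expect whatever difficulty there is to lie.
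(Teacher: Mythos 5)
Your proof is correct and follows essentially the same route as the paper: all three parts reduce to the degree formula of Theorem~\ref{Bourbakiideal} combined with the bound $\tau(X)\leq\mu(X)\leq d(d-1)$ for irreducible curves, which the paper simply cites from Dimca's book while you rederive it from the genus bound $\sum_p\delta_p\le\binom{d}{2}$ and $\mu_p\le 2\delta_p$. The only cosmetic divergence is in (ii), where you deduce $d\ge 4$ from $d\ge 2e$ (using that $e$ is the smaller exponent) instead of the paper's quadratic inequality $e^2-ed+d\le 0$; both are valid.
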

		\begin{proof}
			(i) We have $\deg (R/J_f)=\tau(X)\leq \mu(X)\leq d(d-1)$, where the last inequality is stated in  \cite[Chapter 4, Section 4, (4.5)]{DimcaBook1} when $f$ is irreducible.
			Now apply (\ref{degree_of_Bourbaki_ideal}).
			
			(ii) If $f$ is free then by Theorem~\ref{Bourbakiideal} (a), one has $\deg(R/J_f)=d^2+e(e-d)$ and, on the other hand, as above,  $\deg (R/J_f)\leq d(d-1)$. Therefore, $e\geq 2$.
			
			For the additional statement, one can verify that $d\leq 3$ leads to a contradiction with the inequality $e(e-d)+d\leq 0$.
			
			(iii)  Again, by the assumption and the above inequality, we have  $e(e-d)+d\leq 1$.
			Another  calculation with $e\geq d$ (hence, $e=d$) and $d\geq 2$ leads to a contradiction.
		\end{proof}

		\begin{Remark}
			Note that (ii) may fail if $f$ is not irreducible, e.g.,  $f=xyz$. 
		\end{Remark}	
		
		Recall the following class of curves introduced in  \cite{Dimca-Sticlaru2}. 
		\begin{Definition}
			A reduced curve $X=V(f)$ of degree $d+1$ is called {\em nearly free} if  $R/J_f$ has a graded minimal free resolution of the form
			\begin{equation}\label{Res-Nearlyfree}
				0\rar R(-d-a_2-1)\rar R(-d-a_1)\oplus R^2(-d-a_2)\rar R^3(-d)\rar R\rar R/J_f\rar 0,
			\end{equation}
			for some integers $1\leq a_1\leq a_2$. 
		\end{Definition}
		In analogy to the case of free curves, these shifts are called the {\em exponents} of $X$, and satisfy the equality $a_1+a_2=d+1$ -- in particular, one has $a_1\leq (d+1)/2$.
		
		The following result characterizes nearly free curves in terms of the Bourbaki degree. 
		\begin{Proposition}\label{Bour_for_nearlyfree}
			Let $X=V(f)$ be a reduced curve of degree $d+1$. Then $\bour(X)=1$ if and only if $X$ is nearly free. 
		\end{Proposition}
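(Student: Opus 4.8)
The plan is to apply Theorem~\ref{Bourbakiideal}(c) in both directions, converting each implication into a statement about the minimal graded free resolution of $\syz(J_f)$ and tracking the twists forced by the relation $a_1+a_2=d+1$.

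For the implication ``$X$ nearly free $\Rightarrow \bour(X)=1$'', I would start from a minimal resolution~\eqref{Res-Nearlyfree} with $1\le a_1\le a_2$, pass to the first syzygy module of $J_f$ and rescale degrees to standard form as in~\eqref{shifted_presentation}, obtaining the minimal resolution
\[0\rar R(-a_2-1)\lar R(-a_1)\oplus R^2(-a_2)\lar \syz(J_f)\rar 0 .\]
Hence $e:=\mathrm{indeg}(\syz(J_f))=a_1$, realized by a minimal generator $\epsilon$ of that degree, the remaining two generators having degree $a_2$. Since~\eqref{Res-Nearlyfree} has length three, $R/J_f$ is not Cohen--Macaulay, so $f$ is not a free divisor and Theorem~\ref{Bourbakiideal}(b),(c) applies with $F_0=R^2(-a_2)$ and $F_1=R(-a_2-1)$. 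Because $d-e=d-a_1=a_2-1$, the resolution~\eqref{resolution_of_Bourbaki_ideal} of $I_\epsilon$ collapses to $0\rar R(-2)\rar R^2(-1)\rar I_\epsilon\rar 0$, so $R/I_\epsilon$ has Hilbert series $(1-t)^2/(1-t)^3=1/(1-t)$ and $\bour(X)=\deg(R/I_\epsilon)=1$.

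For the converse, assume $\bour(X)=1$. A free divisor has $\bour(X)=0$, so $f$ is not free, and by Theorem~\ref{Bourbakiideal}(b),(c) the ideal $I_\epsilon$ is a codimension two perfect homogeneous ideal with $\deg(R/I_\epsilon)=1$. I would argue that such an ideal must be a complete intersection of two linear forms: $R/I_\epsilon$ is Cohen--Macaulay of dimension one, so its $h$-vector is non-negative, starts with $1$, and sums to $\deg(R/I_\epsilon)=1$, hence equals $(1)$; thus the Hilbert series of $R/I_\epsilon$ is $1/(1-t)$, so $\dim_k(R/I_\epsilon)_1=1$, and $I_\epsilon$ contains two linearly independent linear forms $\ell_1,\ell_2$; comparing Hilbert functions yields $I_\epsilon=(\ell_1,\ell_2)$, with minimal resolution $0\rar R(-2)\rar R^2(-1)\rar I_\epsilon\rar 0$. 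Matching this, via uniqueness of minimal graded free resolutions, against~\eqref{resolution_of_Bourbaki_ideal} forces $F_0=R^2(-(d-e+1))$ and $F_1=R(-(d-e+2))$; substituting into~\eqref{resolution_of_syz} and rescaling degrees by $d$ recovers a minimal resolution of $R/J_f$ of the form~\eqref{Res-Nearlyfree} with exponents $a_1=e$ and $a_2=d+1-e$. Here $a_1\ge 1$ since $V(f)$ is not a cone (there is no constant syzygy), and $a_1\le a_2$ since $e$ is the initial degree of $\syz(J_f)$ while its other generators have degree $d+1-e$. Therefore $X$ is nearly free.

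The degree-shift bookkeeping is routine; the step that I expect to demand genuine care --- in the converse --- is the deduction that a codimension two perfect ideal with $\deg(R/I_\epsilon)=1$ is generated by two linear forms (disposed of by the $h$-vector computation), together with the appeal to uniqueness of minimal graded free resolutions, which is what actually carries the ``nearly free'' shape from $I_\epsilon$ back to $R/J_f$.
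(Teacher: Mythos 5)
Your proof is correct and follows essentially the same route as the paper: both directions pass through Theorem~\ref{Bourbakiideal}(c), transporting the minimal graded free resolution between $\syz(J_f)$ and $I_\epsilon$ and matching the twists via $a_1+a_2=d+1$. The only difference is cosmetic --- you supply the $h$-vector argument showing that a codimension two perfect ideal with $\deg(R/I_\epsilon)=1$ must be generated by two independent linear forms, a step the paper simply asserts.
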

		\begin{proof}
			Assume that  $\bour(X)=1$. Since a Bourbaki ideal is a codimension two perfect ideal, the Bourbaki ideal $I_{\epsilon}$ of $\syz(J_f)$ associated to a syzygy $\epsilon$ of degree $e=\mathrm{indeg}(\syz(R/J_f))$ is a complete intersection generated by two linear forms and has the graded minimal free resolution
			\[0\rar R(-2)\rar R(-1)\oplus R(-1)\rar I_{\epsilon}\rar  0. \]
			Now we apply part (c) of  Theorem~\ref{Bourbakiideal}. One has 
			\[0\rar R((e-d)-2)\rar R^2((e-d)-1)\oplus R(-e)\rar \syz(R/J_f)\rar 0,\]
			which forces that $d-e+1\geq e$. Therefore, we get  
			\[0\rar R(-d-(d-e+1)-1)\rar  R(-d-e)\oplus  R^2(-d-(d-e+1)) \rar R^3(-d)\rar R\rar R/J_f\rar 0,\]
			which proves the assertion.
			
			Conversely, assume that $X$ is nearly free with exponent $a_1\leq a_2$. Then by~\eqref{Res-Nearlyfree}, the $R$-module $\syz(R/J_f)$ has the graded minimal resolution of the form 
			\[0\rar R(-a_2-1)\stackrel{\begin{bmatrix}
					h\\
					\ell_1\\
					\ell_2
			\end{bmatrix}}\rar R(-a_1)\oplus R^2(-a_2)\rar \syz(R/J_f) \rar 0,\]
			where $\ell_1, \ell_2$ are linear forms. 
			By Theorem~\ref{Bourbakiideal} (c), choosing the unique generator $\epsilon$ of standard degree $a_1$, we get the minimal graded resolution of $I_{\epsilon}$
			\[0\rar R(-a_2-(a_1-d)-1)\stackrel{\begin{bmatrix}
					\ell_1\\
					\ell_2
			\end{bmatrix}}{\rar }  R^2(-a_2-(a_1-d))\rar I_{\epsilon}\rar 0,\]
			which forces that $a_2+(a_1-d)=1$ as $I_{\epsilon}$ is a codimension $2$ perfect ideal.  
			Thus, $I_{\epsilon}$ is generated by two linear forms, hence $\bour(X)=\deg(R/I_{\epsilon})=1$.  
		\end{proof}
		Since the definition of nearly free curves is given in terms of the shape of the corresponding free resolution, one might expect that larger values of $\bour(X)$ are responsible for certain shapes of the resolution as for $\bour(X)=1$.
		This sort of approach has been the main thread of recent work of Dimca--Sticlaru, by stressing  numerical invariants other than the one here.
		The following proposition is an example of how one may proceed one step further with a case embodied in the event of $3$-{\em syzygy curves} as in \cite[Proposition 3.1]{Dimca-Sticlaru4}.
		
		\begin{Proposition}
			Let $X=V(f)$ be a singular singular curve of degree $d+1$ and let $e={\rm indeg}(\syz(J_f))$.  Then $\bour(X)=2$ if and only if  
			$R/J_f$ has a graded minimal free resolution of the form
			{\small
				\begin{eqnarray*}
					0\rar R(-(d+(d-e+3)))\kern-7pt&\rar & \kern-7pt R(-(d+(d-e+2)))\oplus R(-(d+(d-e+1)))\oplus R(-(d+e))\\ 
					&\stackrel{\phi}{\rar}&R^3(-d)\rar R\rar R/J_f \rar 0, 
				\end{eqnarray*}
			}
			where $e\leq d+1/2$.
		\end{Proposition}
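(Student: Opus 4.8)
I would follow the template of Proposition~\ref{Bour_for_nearlyfree}, replacing the complete intersection of type $(1,1)$ there by one of type $(1,2)$. For the forward implication, suppose $\bour(X)=2$. Then $f$ is not a free divisor (otherwise $\bour(X)=0$), so Theorem~\ref{Bourbakiideal}(b) applies to a minimal generator $\epsilon$ of $\syz(J_f)$ of initial degree $e$: the Bourbaki ideal $I_\epsilon$ is a codimension-two perfect ideal with $\deg(R/I_\epsilon)=2$. The first task is to pin down its minimal graded free resolution. Since $R/I_\epsilon$ is a one-dimensional Cohen--Macaulay standard graded $k$-algebra of multiplicity $2$, reduction modulo a general linear form produces an Artinian algebra of $k$-dimension $2$, whence $R/I_\epsilon$ has $h$-vector $(1,1)$ and Hilbert series $(1+t)/(1-t)$. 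Writing the numerator $1-\sum t^{a_i}+\sum t^{b_j}=(1+t)(1-t)^2=1-t-t^2+t^3$ in Hilbert--Burch format leaves only the two numerical solutions $\{a_i\}=\{1,2\},\{b_j\}=\{3\}$ and $\{a_i\}=\{1,2,2\},\{b_j\}=\{2,3\}$; the second is discarded because minimality forces its degree-zero matrix entries to vanish, producing a zero column in the Hilbert--Burch matrix. Thus the minimal resolution of $I_\epsilon$ is $0\to R(-3)\to R(-1)\oplus R(-2)\to I_\epsilon\to 0$, i.e.\ $I_\epsilon$ is a complete intersection of a linear form and a conic.

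Feeding this into Theorem~\ref{Bourbakiideal}(c) and comparing with $0\to F_1(d-e)\to F_0(d-e)\to I_\epsilon\to 0$ gives $F_0=R(-(d-e+1))\oplus R(-(d-e+2))$ and $F_1=R(-(d-e+3))$, hence
\[
0\to R(-(d-e+3))\to R(-e)\oplus R(-(d-e+1))\oplus R(-(d-e+2))\to \syz(J_f)\to 0 .
\]
Splicing this with the presentation $0\to\syz(J_f)\to R^3\to J_f(d)\to 0$ of~\eqref{shifted_presentation} (twisted by $-d$) and bookkeeping the twists yields exactly the asserted resolution of $R/J_f$; it is minimal because the entries of $(f_x,f_y,f_z)$, of the matrix of generating syzygies of standard degrees $e,\ d-e+1,\ d-e+2$ (all positive, since $e\le d$ because the Koszul syzygies already have standard degree $d$), and of the single last syzygy all lie in $\mathfrak m$. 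Finally, $d-e+1$ is the degree of a minimal generator of $\syz(J_f)$, hence is $\ge e=\mathrm{indeg}(\syz(J_f))$, which gives $e\le(d+1)/2$.

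For the converse, assume $R/J_f$ has the displayed minimal resolution with $e\le(d+1)/2$. The tail of that resolution, twisted by $d$, is a minimal free resolution $0\to R(-(d-e+3))\to R(-e)\oplus R(-(d-e+1))\oplus R(-(d-e+2))\to\syz(J_f)\to 0$; in particular $\syz(J_f)$ has projective dimension one, so $f$ is not a free divisor, and $e\le d-e+1$ shows $e=\mathrm{indeg}(\syz(J_f))$. Choosing $\epsilon$ to be a minimal generator of degree $e$ and applying Theorem~\ref{Bourbakiideal}(c) (with $F_0=R(-(d-e+1))\oplus R(-(d-e+2))$ and $F_1=R(-(d-e+3))$, so that $F_0(d-e)=R(-1)\oplus R(-2)$ and $F_1(d-e)=R(-3)$) produces the resolution $0\to R(-3)\to R(-1)\oplus R(-2)\to I_\epsilon\to 0$; a Hilbert series computation along it gives $\deg(R/I_\epsilon)=2$, that is $\bour(X)=2$.

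I expect the only genuine obstacle to be the middle of the first step: proving that codimension-two perfectness together with multiplicity $2$ forces precisely the shape $0\to R(-3)\to R(-1)\oplus R(-2)\to I_\epsilon\to 0$ and nothing else. This is an instance of the fact that the graded Betti numbers of a codimension-two perfect ideal are determined by the Hilbert function, and here I would prove it in place by the short elimination of the single competing numerical solution above rather than by quoting a general result; the remainder is a formal transport of resolutions through Theorem~\ref{Bourbakiideal}.
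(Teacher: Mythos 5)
Your argument is correct and follows essentially the same route as the paper: both directions are a transport of the Hilbert--Burch resolution of $I_\epsilon$ through Theorem~\ref{Bourbakiideal}(c), with the degree computed by the Hilbert series. In fact you supply more detail than the paper at the one nontrivial point, namely why $\deg(R/I_\epsilon)=2$ forces $I_\epsilon$ to be a complete intersection of a linear and a quadratic form -- the paper simply asserts this. One small inaccuracy in your write-up: it is not true that there are ``only two numerical solutions'' to $1-\sum t^{a_i}+\sum t^{b_j}=1-t-t^2+t^3$; adding any cancelling pair $t^c-t^c$ gives infinitely many (e.g.\ $\{a_i\}=\{1,2,c\}$, $\{b_j\}=\{3,c\}$), and the parenthetical claim that graded Betti numbers of codimension-two perfect ideals are determined by the Hilbert function is false in general. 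All these extra solutions are killed by the same zero-row/zero-column minimality argument you use, but the cleanest fix is the one already implicit in your first step: the $h$-vector $(1,1)$ forces $\dim_k(R/(I_\epsilon+(\ell)))_1=1$ for a general linear form $\ell$, hence $I_\epsilon$ contains a linear form, and cutting down to $k[u,v]$ shows $I_\epsilon=(\ell',q)$ with $\deg q=2$; this settles the shape of the resolution without any enumeration.
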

		\begin{proof}
			Suppose that $R/J_f$ has the graded minimal resolution as stated. Then, $\syz(J_f)={\rm im}(\phi)$ has the graded minimal free resolution 
			\[0\rar R(-(d-e+3))\rar R(-(d-e+2))\oplus R(-(d-e+1))\oplus R(-e)\rar \syz(J_f)\rar 0. \]
			By  Theorem~\ref{Bourbakiideal} (c) the graded minimal resolution of $I_{\epsilon}$ is of the form 
			\begin{equation}\label{Bour2}
				0\rar R(-3)\rar R(-2)\oplus R(-1)\rar I_{\epsilon}\rar 0.
			\end{equation}
			Hence 
			\[\mathrm{HS}_{R/I_{{\epsilon}}}(t)=\dfrac{1-t-t^2+t^3}{(1-t)^3}=\dfrac{1+t}{(1-t)}.\]
			Therefore, $\bour(X)=\deg(R/I_{\epsilon})=2$. 
			Conversely, assume that the Bourbaki number of $X$ is two. In this degree the codimension two perfect ideal $I_{\epsilon}$ is necessarily a complete intersection generated by a linear form  and a quadratic form, which implies the graded minimal free resolution 
			\[0\rar R^3(-(d-e+3))\rar R(-(d-e+2))\oplus R(-(d-e+1))\rar I_{\epsilon}(e-d), \]
			which forces that $e\leq (d+1)/2$. By the proof of Theorem~\ref{Bourbakiideal} (c), it gives a minimal free resolution of $\syz(J_f)$, which extends to the stated resolution of  $R/J_f$ by shifting by $-d$.
		\end{proof}
		
		We note that a reduced curve whose gradient ideal admits a free resolution as in the above proposition is a special case of both a $3$-syzygy curve and a {\em one-plus curve} -- the latter having been characterized in terms of the shape of the free resolution of the corresponding gradient ideal in \cite[Theorem 2.3]{Dimca-Sticlaru4}, partially drawing on \cite{Hamid-A}.
		
		It is quite obvious that, in terms of the Bourbaki degree, the result of \cite[Proposition 3.1]{Dimca-Sticlaru4} reads to the effect that a $3$-syzygy curve is characterized by $\bour(X)$ being the product of two suitable integers $\geq 1$.
		It remains the question as to when such integers validate the effective existence of a corresponding $3$-syzygy curve. For the case where these are equal integers, the singular curves in the family in \cite[Example]{Simis-triply} respond to this question.
		
		\begin{Remark}
			The inverse question, as to whether or when a given homogeneous codimension two Cohen--Macaulay ideal $I\subset R$ is the Bourbaki ideal of some torsionfree maximal Cohen--Macaulay module $M$, has been solved in \cite{Herzog-Kuhl}, under the additional  requirement that  the sought Bourbaki sequence be of the form $0\rar F \rar M \rar I\rar 0$, where $F$ is free of rank equal to the {\em type} of $I$ (i.e., the number of columns in an $(n+1)\times n$ matrix defining $I$).
			In addition, the solution is essentially unique.
			
			To relate to our setup, we have to observe two points: first, in our typical Bourbaki sequence, the departing  module $\syz(R/J_f$ is torsionfree -- hence, has maximal dimension -- but it has depth $2$, so is not Cohen--Macaulay. Thus, an inverse question would have to change gear at this point, possibly renouncing to any uniqueness statement.
			Second, the free kernel in our Bourbaki sequence has rank one. Thus, to engage in the above, even forgetting about the Cohen--Macaulay property, we would impose that $I$ has type one, which means that $I$ is a complete intersection of two forms.
			
			Then, in this situation, pretty much like in the proof of Proposition~\ref{Bour_for_nearlyfree} (first direction), we can try to reconstruct a certain free resolution and expect it to be the resolution of a torsionfree $R$-module isomorphic to $\syz(R/J_f)$, for some $f\in R$.
			Though it sounds quite loose, this is what is happening in the proof of the theorem when the two forms are linear.
			
			Of course, in our setup, the natural question would be as to when a given homogeneous codimension two Cohen--Macaulay ideal $I\subset R$ is the Bourbaki ideal of some reduced curve $X$, for a suitable generating syzygy degree.
			At the moment, we even lack some natural necessary conditions.
		\end{Remark}

		\section{Lower bounds to the Bourbaki degree}
		
		For any two positive integers $d,e$, one has $d^2+e(e-d)>d^2-2ed+e^2= (d-e)^2\geq 0$. Therefore,  \eqref{degree_of_Bourbaki_ideal} implies that $d^2+e(e-d)$  is an upper bound for the degree of a Bourbaki ideal associated with any choice among the minimal generators of $\syz(J_f)$. In particular,  the Bourbaki degree of a plane curve of degree $d+1$ is bounded above by $d^2+e(e-d)$, where $e={\rm indeg}(\syz(J_f))$ -- note that this bound bounces between the two extremes $d^2$ and $d(d-1)+1$.
		
		In this section, we deal with finding a corresponding lower bound.
		
		\subsection{Milnor and Tjurina numbers}
		We provide a brief retrospection of the Milnor and Tjurina numbers in the case of a reduced singular curve $X=V(f)\subseteq \PP^2$ of degree $d+1\geq 2$.
		
		Let  $p\in \mathrm{Sing}(X)$. Viewing $p$ as a point of $\PP^2$, its ideal  is a minimal prime $\wp$ of the one-dimensional ring $R/J_f$.Thus, assuming that  $k$ is algebraically closed,  $\wp$ is generated by two linear forms. By a projective transformation,  assume, say, that $p=[0:0:1]$.  Considering the affine chart $U_z=\AA_k^2$ with coordinate ring $A=k[T,U]=k[x/z,y/z]$, $p$ corresponds to the point whose ideal is the maximal ideal $\fp:=(T,U)$. Set accordingly, $F(T,U):=f(x/z,y/z, 1)$, and let $J_F=(F_X,F_Y)$ be the gradient ideal of $F$ in $k[T,U]$ and $\tilde{J}_F:=(F,F_T,F_U)$ its ``full'' Jacobian ideal.
		
		\begin{Definition}\rm
			The {\em Milnor number} (respectively, the {\em Tjurina number}) $\mu_{\fp}(F)$   (respectively, $\tau_{\fp}(F)$) of $F\in k[T,U]$ at $\fp$ is $\dim _k A_{\fp}/(J_F)_{\fp}$ (respectively, $\dim _k A_{\fp}/(\tilde{J}_F)_{\fp}$.
			We often write $\mu_{\fp}(X)$   (respectively, $\tau_{\fp}(X)$) if no confusion arises.
		\end{Definition}
		We call $M_{\fp}(F):=A_{\fp}/(J_F)_{\fp}$ and $T_{\fp}(F):=A_{\fp}/(\tilde{J}_F)_{\fp}$ the Milnor local algebra and the Tjurina local algebra at $\fp$, respectively.
		The {\em total Milnor number} (respectively,  {\em total Tjurina number}) of $V(f)$ 
		is the summation of the local Milnor numbers (respectively, local Tjurina numbers) at all  singular points of $X=V(f)$.
		The latter will be denoted by $\mu(X)$ and $\tau(X)$, respectively.
		
		%	Clearly, $\tau(X)\leq\mu(X)$, while $\mu(X)\leq (d+1-1)^2=d^2$.  (\textcolor{blue}{(\sc How does one prove the latter inequality? Where is it applied anyway?)})
		
		The following is well-known, but we include a proof:
		\begin{Proposition}\label{Tjurina_is_deg_grad} {\rm ($k$ is algebraically closed and  char$(k)$ does not divide $d+1$)}. Let $f\in R$ be a reduced form of degree $d+1$ defining a singular curve. Then, with the above notation, one has
			\begin{equation}\label{deg_of_gradient__is_Tjurina}
				\deg(R/J_f)=\tau(X).
			\end{equation}\
		\end{Proposition}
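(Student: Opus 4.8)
The plan is to read off $\deg(R/J_f)$ from the associativity formula for multiplicities and to match each local contribution with a local Tjurina number.

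First I would record the interplay between the gradient ideal and the ``full'' Jacobian ideals in the affine charts. Since $\mathrm{char}(k)$ does not divide $d+1$, the Euler relation $(d+1)f = x f_x + y f_y + z f_z$ gives $f \in J_f$. Working in the chart $U_z$, with $A = k[T,U]$ and $F = f(T,U,1)$, the chain rule yields $F_T = f_x(T,U,1)$ and $F_U = f_y(T,U,1)$, while dehomogenizing the Euler relation gives $(d+1)F = T F_T + U F_U + f_z(T,U,1)$. Hence the dehomogenization of $J_f=(f_x,f_y,f_z)$ equals $(F_T,F_U,f_z(T,U,1)) = (F,F_T,F_U) = \tilde{J}_F$, using that $d+1$ is a unit; the charts $U_x$ and $U_y$ are handled symmetrically. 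In particular $\mathrm{Proj}(R/J_f)$ coincides with the singular subscheme of $X$, whose local ring at a point $p$ lying in $U_z$ is, by the above, the Tjurina algebra $T_{\fp}(F) = (A/\tilde{J}_F)_{\fp}$.

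Next, since $f$ is reduced and $X$ singular, $J_f$ has codimension exactly two, so $R/J_f$ is one-dimensional and $\deg(R/J_f)$ is defined. Its minimal primes $\wp$ of dimension one are precisely the homogeneous ideals of the points $p \in \Sing(X)$; as $k$ is algebraically closed, each such $\wp$ is generated by two linear forms, so $R/\wp$ is a one-variable graded polynomial ring and $\deg(R/\wp)=1$. The associativity formula for degrees then gives
\[
\deg(R/J_f) = \sum_{p \in \Sing(X)} \ell\big((R/J_f)_{\wp}\big).
\]
Finally, each local length $\ell\big((R/J_f)_{\wp}\big)$ equals the length of $\mathcal{O}_{\mathrm{Proj}(R/J_f),\,p}$, which by the first step is $T_{\fp}(F)$; since $k$ is algebraically closed this length equals $\dim_k T_{\fp}(F) = \tau_{\fp}(X)$. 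Summing over $p \in \Sing(X)$ yields $\deg(R/J_f) = \sum_p \tau_{\fp}(X) = \tau(X)$.

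The routine but delicate point, and the part I expect to require the most care, is the passage between the graded localization $(R/J_f)_{\wp}$ of the one-dimensional graded ring and the affine Tjurina algebra $T_{\fp}(F)$: concretely, inverting $z$ identifies $R/J_f$ with $(A/\tilde{J}_F)[z,z^{-1}]$, and localizing further at $\wp = \fp\, R_z$ multiplies the residue field by $k(z)$ without changing the length of a composition series, so that algebraic closedness of $k$ is precisely what guarantees all composition factors are ``the same''. One must also note that $\deg(R/J_f)$, being the algebraic multiplicity of the one-dimensional graded ring $R/J_f$, is the degree of the zero-dimensional scheme $\mathrm{Proj}(R/J_f)$, i.e. the sum $\sum_p \dim_k \mathcal{O}_{\mathrm{Proj}(R/J_f),\,p}$. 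Both facts are standard, and both use exactly the two standing hypotheses $k = \bar{k}$ and $\mathrm{char}(k) \nmid d+1$.
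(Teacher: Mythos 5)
Your argument is correct and is essentially the paper's own proof: both compute $\deg(R/J_f)$ by the associativity formula over the minimal primes $\wp$ (which have $\deg(R/\wp)=1$ since $k$ is algebraically closed) and then use the Euler relation, with $z$ invertible near the point, to replace $(f_x,f_y,f_z)_\wp$ by $(f,f_x,f_y)_\wp$ and identify the local length with $\tau_p(X)$. The only difference is presentational — you pass explicitly to the affine chart and track the residue-field bookkeeping between $(R/J_f)_\wp$ and the affine Tjurina algebra, whereas the paper works directly in the graded localization and leaves that identification implicit.
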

		\begin{proof}
			By a linear change of coordinates, we may assume that $z\notin \wp$, for every minimal prime $\wp$ of $R/J_f$.
			%	The degree of the projective scheme $\mathrm{Proj}(R/J_f)$, denoted by  $\deg(R/J_f)$, is equal to the total Tjurina number $\tau(X)$. Indeed, up to the projective change of coordinates, we may assume that all singular points of $X$ all lie in one and the same affine piece. Using the above notation, we have  $\deg(R/J_f)$ is equal the length $\ell(A/J_F)$. By the well-known property of the Artinian ring, one has $\ell(A/J_F)=\sum_i\ell(A_{\fp_i}/(J_F)_{\fp_i})$, where $i$ runs through the set indexing the minimal primes of $A/J_F$ (i.e., the finitely many singular points of the affine curve).  
			By the associativity formula, one has:
			\begin{eqnarray*}
				\deg (R/J_f)&=&\sum_{\wp} \ell(R_{\wp}/\mathcal{P}_{\wp}), \; \text{\rm where $\mathcal{P}$ is the ${\wp}$-primary component}\\
				&=& \sum_{\wp} \ell(R_{\wp}/(J_f)_{\wp})=\sum_{\wp} \ell(R_{\wp}/(f_x,f_y,f_z)_{\wp})\\
				&=& \sum_{\wp} \ell(R_{\wp}/(f,f_x,f_y)_{\wp})\\
				&=& \tau(X)
			\end{eqnarray*}
			where $z\notin {\wp}$ and the Euler relation $(d+1)f=f=xf_x+yf_y+zf_z$ imply that $f_z\in (f,f_x,f_y)$.
		\end{proof}	
		
		The following consequence gives another proof of \cite[Theorem 3.2]{CTC-Plessis}.
		\begin{Theorem}
			Let $X=V(f)$ be a reduced singular curve of degree $d+1$ and let $e=\mathrm{indeg}(\syz(J_f))$. Then 
			\[ d(d-e) \leq \tau(X) \leq d^2+e(e-d). \]
		\end{Theorem}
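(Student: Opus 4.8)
The plan is to deduce both inequalities purely formally from the structural results already in hand, using the identity
\[ \bour(X) = d^2 + e(e-d) - \tau(X) \]
together with the two elementary estimates $\bour(X) \ge 0$ and $\bour(X) \le e^2$.

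First I would rewrite the Bourbaki degree in terms of the Tjurina number. By Proposition~\ref{Tjurina_is_deg_grad} one has $\deg(R/J_f) = \tau(X)$ (here the standing hypotheses that $k$ is algebraically closed and $\chara(k) \nmid d+1$ enter), and by formula~\eqref{formula_definite}, which restates Theorem~\ref{Bourbakiideal}(b), one has $\bour(X) = d^2 + e(e-d) - \deg(R/J_f)$ for any reduced singular curve, where $e = \mathrm{indeg}(\syz(J_f))$. Combining these gives the displayed identity. I would also record that the Koszul syzygies among $f_x, f_y, f_z$ have standard degree $d$, so $e \le d$ and the left-hand quantity $d(d-e)$ is non-negative, as a lower bound should be.

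For the upper bound $\tau(X) \le d^2 + e(e-d)$, it is enough to know that $\bour(X) \ge 0$. If $f$ is not a free divisor, then by Theorem~\ref{Bourbakiideal}(b) the Bourbaki ideal $I_\epsilon$ is a proper ideal of codimension two, so $R/I_\epsilon$ is one-dimensional and $\bour(X) = \deg(R/I_\epsilon) \ge 1$; if $f$ is a free divisor, then $\bour(X) = 0$ by convention and in fact $\tau(X) = \deg(R/J_f) = d^2 + e(e-d)$ by Theorem~\ref{Bourbakiideal}(a), so equality holds. In all cases the displayed identity yields $\tau(X) = d^2 + e(e-d) - \bour(X) \le d^2 + e(e-d)$.

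For the lower bound I would invoke Theorem~\ref{loweBour-number}, which gives $\bour(X) \le e^2$. Plugging this into the displayed identity gives $d^2 + e(e-d) - \tau(X) \le e^2$, i.e. $\tau(X) \ge d^2 + e(e-d) - e^2 = d^2 - ed = d(d-e)$, the desired bound. I do not expect a genuine obstacle: the statement is a bookkeeping corollary of Theorem~\ref{Bourbakiideal} and Theorem~\ref{loweBour-number}, the only mild points being the translation $\deg(R/J_f) = \tau(X)$ and the separate, trivial treatment of the free-divisor case in the upper bound.
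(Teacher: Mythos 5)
Your proposal is correct and follows essentially the same route as the paper: translate $\tau(X)=\deg(R/J_f)$ via Proposition~\ref{Tjurina_is_deg_grad}, get the upper bound from the nonnegativity of $\bour(X)$ in formula~\eqref{degree_of_Bourbaki_ideal}, and the lower bound from the estimate $\bour(X)\leq e^2$ of Theorem~\ref{loweBour-number}. Your explicit handling of the free-divisor case is a welcome bit of extra care that the paper's terse proof leaves implicit, but it is not a different argument.
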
 
		\begin{proof}
			Since $V(f)$ is singular, by Proposition~\ref{Tjurina_is_deg_grad} one can replace $\tau(X)$ by $	\deg(R/J_f)$.
			Then, the upper bound follows form~(\ref{degree_of_Bourbaki_ideal}), and the lower bound follows form Theorem~\ref{loweBour-number}.  
		\end{proof}	
		
		A natural question is whether one can compute the Bourbaki degree of a curve in terms of the Bourbaki degrees of its components.

		Next is the picture in the case of two smooth curves.
		
		\begin{Proposition}
			If $X_1=V(f_1)$ and $X_2=V(f_2)$ are smooth curves intersecting transversally, then
			$$\bour(X_1\cup X_2)=\bour(X_1)+\bour(X_2)+(\deg f_1-1)
			(\deg f_2-1).$$
		\end{Proposition}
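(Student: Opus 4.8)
The plan is to apply the master formula \eqref{formula_definite}, $\bour(X)=d^2+e(e-d)-\deg(R/J_f)$, to $f=f_1f_2$; everything then reduces to computing the two numerical inputs for the reducible curve. Write $m_i:=\deg f_i$, so $X=X_1\cup X_2$ has degree $m_1+m_2=d+1$, and recall that $\bour(X_i)=(m_i-1)^2$ by Proposition~\ref{smooth}. For the first input, transversality and smoothness force $\mathrm{Sing}(X)$ to be exactly the finite set $X_1\cap X_2$, which by Bézout consists of $m_1m_2$ ordinary nodes, each of local Tjurina number $1$; hence $\deg(R/J_f)=\tau(X)=m_1m_2$ by Proposition~\ref{Tjurina_is_deg_grad}.

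The crux is to prove $e=\mathrm{indeg}(\syz(J_f))=m_1+m_2-2=d-1$. The inequality $e\le d-1$ I would get from the explicit \emph{Jacobian syzygy} $\delta_J$, the derivation whose coordinate vector is the cross product $\nabla f_1\times\nabla f_2$ (equivalently, the three $2\times 2$ minors of the Jacobian matrix of $(f_1,f_2)$): since $\delta_J(f_i)$ is a determinant with a repeated row it vanishes, so $\delta_J(f)=0$, i.e.\ $\delta_J\in\syz(J_f)$, and it has standard degree $(m_1-1)+(m_2-1)=d-1$ (it is nonzero, and transversality guarantees its coordinates have no common factor). For the reverse inequality I would use that $\der_f(R)=\der_{f_1}(R)\cap\der_{f_2}(R)$ because $f_1,f_2$ are coprime, together with the Euler splittings $\der_{f_i}(R)=\syz(J_{f_i})\oplus R\delta_E$: a syzygy $\delta\in\syz(J_f)$ of degree $s$ decomposes as $\delta=\delta_i+c_i\delta_E$ with $\delta_i\in\syz(J_{f_i})$, giving $\delta(f_1)=hf_1$ and $\delta(f_2)=-hf_2$ for a form $h$ of degree $s-1$ proportional to the Euler component $c_1$; the kernel of $\delta\mapsto h$ is $W:=\syz(J_{f_1})\cap\syz(J_{f_2})$, the module of derivations annihilating both $f_1$ and $f_2$. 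Now $W$ is reflexive of rank one — an intersection of two rank-two reflexive submodules of $R^3$ whose generic span is all of $R^3$ — hence \emph{free} over the UFD $R$; moreover its generator must be $\delta_J$, since such a derivation has coordinate vector proportional over the fraction field to $\nabla f_1\times\nabla f_2$, and the content-one property of $\delta_J$ upgrades this to a polynomial multiple. Thus $\mathrm{indeg}(W)=d-1$, which settles the case $h=0$. For $h\ne 0$ one iterates: the component $\delta_1\in\syz(J_{f_1})$ automatically lies in $\der_{f_2}(R)$ as well, hence splits once more over $\syz(J_{f_2})$, and bookkeeping of the degrees of the Koszul factors $w_i$ with $\delta_i=\nabla f_i\times w_i$ (subject to $\langle\delta_J,w_1\rangle\in(f_2)$ and $\langle\delta_J,w_2\rangle\in(f_1)$) forces $s\ge(m_1-1)+(m_2-1)$. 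This last bookkeeping is the step I expect to be the main obstacle; an alternative route is to exploit the exact sequence $0\to\syz(J_f)\oplus R\delta_E\to\der_{f_1}(R)\oplus\der_{f_2}(R)\to\der_{f_1}(R)+\der_{f_2}(R)\to 0$, noting that $\der_{f_1}(R)+\der_{f_2}(R)$ coincides with $\der_k(R)$ in all large degrees, and comparing Hilbert functions.

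Granting $e=d-1$, it remains only arithmetic: substituting $e=d-1$ and $\deg(R/J_f)=m_1m_2$ into \eqref{formula_definite} gives $\bour(X)=d^2-d+1-m_1m_2$, and with $d=m_1+m_2-1$ this expands to $(m_1-1)^2+(m_2-1)^2+(m_1-1)(m_2-1)$, which is precisely $\bour(X_1)+\bour(X_2)+(\deg f_1-1)(\deg f_2-1)$.
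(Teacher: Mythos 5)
Your overall skeleton is exactly the paper's: compute $\deg(R/J_{f_1f_2})=\tau(X)=\deg f_1\deg f_2$ from transversality, B\'ezout and Proposition~\ref{Tjurina_is_deg_grad}; establish that $e=\mathrm{indeg}(\syz(J_{f_1f_2}))=d-1$; and substitute into \eqref{formula_definite}. Your final arithmetic is correct, and your proof of the \emph{upper} bound $e\le d-1$ via the explicit syzygy $\nabla f_1\times\nabla f_2$ is a pleasant self-contained replacement for half of the citation the paper uses.

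The genuine gap is the \emph{lower} bound $e\ge d-1$, which is essential (a smaller $e$ would make $e(e-d)$ more negative and change the answer). Your proposed route does not close it: in the branch $h\ne 0$ the "bookkeeping of the Koszul factors $w_i$" is never carried out, and as you set it up the degree constraint $\langle w_1,\delta_J\rangle\in(f_2)$ is satisfied for trivial degree reasons once $s\ge 1$, so no bound on $s$ falls out; you flag this yourself as the main obstacle. Even the branch $h=0$ rests on the unproved assertion that the coordinates of $\nabla f_1\times\nabla f_2$ have no common factor (a common factor $p$ would mean the two polar maps agree along $V(p)$ -- ruling this out is not a consequence of transversality of $X_1$ and $X_2$ alone and needs an argument). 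The paper sidesteps all of this by quoting \cite[Example 2.2 (i)]{Dimca-Sernesi} for nodal non-irreducible curves; equivalently, since all singular points of $X_1\cup X_2$ are nodes, hence quasi-homogeneous with Arnold exponent $\alpha=1$, Theorem~\ref{minimalsyzygy} gives $e\ge \alpha(d+1)-2=d-1$ at once. Replacing your iteration by that citation (or by Theorem~\ref{minimalsyzygy}) turns your proposal into a complete proof along essentially the same lines as the paper's.
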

		\begin{proof}
			Because of transversality, the intersection multiplicity of  the two curves at every intersection point is one.
			Then, by B\'ezout theorem and (\ref{deg_of_gradient__is_Tjurina}), one has $\deg(R/J_{f_1f_2})=\deg f_1\deg f_2$.
			On the other hand, the singular points of $X_1\cup X_2$ are nodal.
			Now, for a non-irreducible nodal curve $V(g)$, the initial degree of $\syz(R/J_g)$ is $\deg f-2$ (\cite[Example 2.2 (i)]{Dimca-Sernesi}).
			Then the result follows from (\ref{Bourbakiideal}).	
		\end{proof}

		\subsection{Quasi-homogeneous singularities}
		
		We refer to \cite[Section 2.2]{SimToh} for the details of this subsection.
		
		Quite generally, a polynomial $f\in k[x_1,\ldots,x_n]$ is {\em Eulerian} if $f\in J_f=(f_{x_1},\ldots,f_{x_n})$. The Euler relation says that a homogeneous polynomial whose degree does not divide char$(k)$ is Eulerian.
		The notion of a quasi-homogeneous or weighted homogeneous polynomial is as near as one can get to the Eulerian property of a homogeneous polynomial.
		
		\begin{Definition}\rm
			$f$ is said to be {\em quasi-homogeneous} if one has 
			$$\lambda f=\sum_{i=1}^n w_i x_if_{x_i},$$
			for some  integer $\lambda>0$ not dividing char$(k)$ and integers $w_i>0$ whose $\gcd$ is one.
		\end{Definition}
		The integers $w_i$ are called (integer) weights, while $w_i/\lambda$ are called rational weights.
		
		We will henceforth only consider quasi-homogeneous singularities in affine dimension two.
		Let $X=V(f)\subset \PP^2$ as before, with $f$ reduced.
		A singular point $p\in\Sing{X}$ is called {\it quasi-homogeneous} singularity if $\mu_p(X)=\tau_p(X)$. If $X$ has only quasi-homogeneous singularities, then $\mu(X)=\tau(X)$. The {\it Arnold exponent}  of a quasi-homogeneous singularity $p\in X$ with rational weights $(w_1/\lambda,w_2/\lambda)$ is defined to be $\alpha_F=(w_1+w_2)/\lambda$. 
		
		Notable examples of quasi-homogeneous singularities are the {\em simple singularities}.
		
		We first list a couple of results to be used next.		
		
		\begin{Theorem}\label{minimalsyzygy}
			(\rm \cite[Theorem 2.1]{Dimca-Sernesi})
			Let $X$ be a reduced curve of degree $d+1$ having only quasi-homogeneous singularities. Then $\mathrm{indeg}(\syz(R/J_f))\geq \alpha(d+1)-2$, where $\alpha$ is the minimum of the Arnold exponents  of the singular points of $X$. 
		\end{Theorem}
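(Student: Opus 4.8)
The plan is to prove the slightly rephrased statement that a nonzero homogeneous syzygy $\rho\in\syz(J_f)$ of standard degree $e$ necessarily satisfies $e\geq\alpha(d+1)-2$; since $\mathrm{indeg}(\syz(R/J_f))$ is by definition the smallest such $e$, this yields the theorem. The strategy is to read off the local structure of $\syz(J_f)$ at each singular point from its quasi-homogeneity, and then to transport that local information to a bound on the global degree $e$ via the cohomology of the curve complement.

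The first step is the local model. Fix $p\in\Sing(X)$; by hypothesis it is quasi-homogeneous, with rational weights $(w_1/\lambda,w_2/\lambda)$ and Arnold exponent $\alpha_p=(w_1+w_2)/\lambda$. Working in an affine chart $\AA^2$ around $p$ with coordinates $(u,v)$, let $F$ be the corresponding dehomogenization of $f$, so $J_F=(F_u,F_v)$. Since $p$ is an isolated singularity, $F_u,F_v$ is a regular sequence in $\mathcal{O}_p$, hence $\syz(J_F)$ is free of rank one over $\mathcal{O}_p$, generated by the Hamiltonian derivation $\eta_p:=F_v\,\partial_u-F_u\,\partial_v$. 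Using the Euler relation $(d+1)f=xf_x+yf_y+zf_z$ to eliminate the $z$-component, the global syzygy $\rho$ restricts at $p$ to an element of $\der_F(\mathcal{O}_p)=\der_F^0(\mathcal{O}_p)\oplus\mathcal{O}_p\cdot\chi_p$, where $\chi_p=w_1u\,\partial_u+w_2v\,\partial_v$ is the local Euler-type derivation and $\der_F^0(\mathcal{O}_p)=\syz(J_F)$. In the analytic coordinates in which $F$ becomes its weighted-homogeneous model of weighted degree $\lambda$, the generator $\eta_p$ is weighted homogeneous of weighted degree $\lambda-w_1-w_2=\lambda(1-\alpha_p)$; thus the local shape of $\syz(J_f)$ near $p$ is entirely governed by $\alpha_p$.

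The crux --- and the part I expect to be the main obstacle --- is to convert this local weighted-degree datum into the asserted bound on the global degree; note that a naive degree count is insufficient, since for a node $\lambda(1-\alpha_p)=0$ while the global initial degree is $d-1$, so one must account for the contribution of the projective curve itself. The bridge is the identification $\syz(J_f)_e\simeq H^0(\PP^2,\mathcal T_f(e))$ together with (\ref{gradient_Geom}) and the logarithmic Koszul--de Rham complex of $f$, which lets a nonzero degree-$e$ syzygy produce a nonzero class in $H^2(\PP^2\setminus X)$ whose pole order is controlled by $e$. One then invokes the comparison between the pole-order filtration $P^\bullet$ and the Hodge filtration $F^\bullet$ on $H^2(\PP^2\setminus X)$: $P^s\subseteq F^s$ always, with equality precisely in the range bounded by the smallest Steenbrink spectral number, which is itself bounded below by $\alpha=\min_p\alpha_p$ (localization of the spectrum at the singular points). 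Were $e<\alpha(d+1)-2$, the class above would have pole order strictly exceeding its Hodge level, contradicting the filtration comparison; hence $e\geq\alpha(d+1)-2$. As a consistency check, an ordinary node has $\lambda=2$, $w_1=w_2=1$ and $\alpha_p=1$, so the bound specialises to $\mathrm{indeg}(\syz(R/J_f))\geq d-1$, matching the value used for one-node curves elsewhere in the paper.
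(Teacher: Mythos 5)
The paper does not actually prove this statement: it is quoted verbatim from Dimca--Sernesi \cite[Theorem 2.1]{Dimca-Sernesi}, so there is no in-paper argument to compare with. What you have written is, in outline, a reconstruction of the strategy of that cited proof (local spectrum of the quasi-homogeneous singularities, plus the comparison of the pole-order and Hodge filtrations on the cohomology of the complement, respectively of the Milnor fiber). So the approach is the right one; the problem is that the decisive step is asserted rather than carried out, and the one precise assertion you do make about it is wrong as stated.

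Concretely: the unconditional inclusion between the two filtrations on $H^2(\PP^2\setminus X)$ goes the other way from what you wrote. By Deligne--Dimca one always has $F^s\subseteq P^s$; the inclusion you actually need, $P^s\subseteq F^s$, is precisely the hard, conditional one, valid only in a range governed by the spectral numbers of the singularities, and it is exactly here that the quasi-homogeneity hypothesis and the Arnold exponent $\alpha$ enter. Writing ``$P^s\subseteq F^s$ always, with equality in the range bounded by the smallest spectral number'' is internally inconsistent (if both inclusions held always, the filtrations would always agree) and hides the content of the theorem. Moreover, even granting the correct comparison statement, your argument never performs the bookkeeping that produces the number $\alpha(d+1)-2$: you must exhibit, from a nonzero syzygy of standard degree $e$, a nonzero class in a specific graded piece $\gr_P^{s}$ with $s$ an explicit function of $e$ and $d+1$ (this is the content of the identification of $\syz(J_f)_e$ with sections of $\mathcal{T}_f(e)$ pushed through the pole-order spectral sequence), and then show that the corresponding $\gr_F^{s}$ vanishes when $e<\alpha(d+1)-2$ because the relevant spectral numbers are all $\geq\alpha$. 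Neither computation appears. Finally, the local paragraph about the Hamiltonian generator $F_v\partial_u-F_u\partial_v$ and its weighted degree $\lambda(1-\alpha_p)$ is correct but is never used by the global argument; what the global argument actually consumes is the statement that the minimal spectral number of a quasi-homogeneous plane curve singularity equals its Arnold exponent, which is a related but different local input. As it stands, the proposal is a plausible roadmap to the Dimca--Sernesi proof, not a proof.
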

		
		The following estimate holds for a reduced plane curve of degree $d+1$ :
		\begin{equation}\label{number_sing}
			|\Sing V(f)|\leq d(d+1)/2.\end{equation}
		This follows from the inequalities
		$$2\,|\Sing V(f)|\leq \sum_{p\in |\Sing V(f)|} m_p(f)(m_p(f)-1)=\sum_{p\in |\Sing V(f)|} m_p(f)m_p(f_x)\leq d(d+1),$$
		where $m_p(f)$ (respectively, $m_p(f_x)$) denotes the multiplicity of $p$ on $V(f)$ (respectively, on $V(f_x)$).
		For the last inequality see \cite[Section 5.3, Corollary 1]{Fulton}.
		
		Moreover, if $f$ is irreducible then (\cite[Lemma 18.5]{Gibson} shows:
		\begin{equation} %(\cite[Lemma 18.5]{Gibson})
			\label{number_sing_irred}
			|\Sing V(f)|\leq d(d-1)/2.
		\end{equation} 
		
		In the sequel, by a nodal curve we understand a reduced curve whose singular points are simple nodes, one of the well-known simple singularities.
		
		\begin{Proposition}%\label{Boundnodal}
			Let $X=V(f)\subset \PP^2$ denote a reduced nodal curve of degree $d+1\geq 2$. One has:
			\begin{enumerate}
				\item[{\rm (i)}] $e:={\rm indeg}(\syz(R/J_f))\geq d-1$.
				\item[{\rm (ii)}] The following bounds hold:
				\begin{equation*}
					\mathrm{Bour}(X)\geq  \left\{\begin{array}{ll}
						d(d-1)/2 & \mbox{if $e=d$}\\
						d(d-3)/2 -1& \mbox{if $e=d-1$}.
					\end{array}\right.
				\end{equation*}
				If, moreover, $f$ is irreducible then
				\begin{equation*}
					\mathrm{Bour}(X)\geq  \left\{\begin{array}{ll}
						d(d+1)/2 & \mbox{if $e=d$}\\
						1+d(d-1)/2 & \mbox{if $e=d-1$}.
					\end{array}\right.
				\end{equation*}
				\item[{\rm (iii)}]  If $f$ is irreducible then $X$ is not a free divisor {\rm (\cite[Theorem 2.1]{Simis-Jacobian})}. If $X$ is a free divisor then $\deg f\leq 3$ and, moreover, $f=xy$ or else $f=xyz$ up to a linear change of variables.
				%\item[{\rm (iv)}]  If $\mathrm{Bour}(X)\leq 3$, then $d+1\leq 5$.
			\end{enumerate}
		\end{Proposition}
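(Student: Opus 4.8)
The plan is to handle the three items in turn, using the master formula \eqref{formula_definite} as the backbone throughout. For item (i): a simple node is analytically the singularity $xy=0$, which is quasi-homogeneous with integer weights $w_1=w_2=1$ and $\lambda=2$, so its Arnold exponent is $(w_1+w_2)/\lambda=1$. Since every singular point of $X$ is a node, the minimum $\alpha$ of the Arnold exponents over $\Sing(X)$ equals $1$, and Theorem~\ref{minimalsyzygy} gives $e=\mathrm{indeg}(\syz(R/J_f))\geq\alpha(d+1)-2=d-1$. On the other hand $\syz(J_f)$ always contains the Koszul syzygies among $f_x,f_y,f_z$, which have standard degree $d$, so $e\leq d$; hence $e\in\{d-1,d\}$, which is precisely the dichotomy that enters item (ii).

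For item (ii): a node satisfies $\mu_p(X)=\tau_p(X)=1$, so $\tau(X)=|\Sing(X)|$, and Proposition~\ref{Tjurina_is_deg_grad} gives $\deg(R/J_f)=\tau(X)=|\Sing(X)|$. Feeding this into \eqref{formula_definite} yields $\bour(X)=d^2+e(e-d)-|\Sing(X)|$. One now substitutes $e=d$ (so $e(e-d)=0$) or $e=d-1$ (so $e(e-d)=1-d$), and bounds the number of nodes from above: \eqref{number_sing} gives $|\Sing(X)|\leq d(d+1)/2$ in general, and \eqref{number_sing_irred} gives $|\Sing(X)|\leq d(d-1)/2$ when $f$ is irreducible. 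The four resulting one-line simplifications then produce (at least) the asserted lower bounds; for instance the irreducible case $e=d$ gives $\bour(X)\geq d^2-d(d-1)/2=d(d+1)/2$, and the irreducible case $e=d-1$ gives $\bour(X)\geq d^2-d+1-d(d-1)/2=1+d(d-1)/2$, with the two reducible cases handled the same way via $|\Sing(X)|\leq d(d+1)/2$.

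For item (iii): an irreducible $f$ cannot be a free divisor --- this is \cite[Theorem 2.1]{Simis-Jacobian}, but it also follows from the count above, since for an irreducible free divisor Theorem~\ref{Bourbakiideal}(a) would force $\deg(R/J_f)=d^2+e(e-d)$, which combined with $\deg(R/J_f)=\tau(X)\leq d(d-1)/2$ and $e\geq d-1$ gives $(d-1)(d-2)\leq 0$, impossible for a singular curve. Hence a nodal free divisor $X=V(f)$ is reducible, and combining $\deg(R/J_f)=d^2+e(e-d)$ with $\deg(R/J_f)=|\Sing(X)|\leq d(d+1)/2$ and $e\in\{d-1,d\}$ forces $d\leq 2$, i.e.\ $\deg f\leq 3$. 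It remains to identify the curves: a reduced nodal conic is a pair of distinct lines, so $f=xy$ up to a linear change of variables; and when $\deg f=3$ the count forces $e=d-1$ and $\tau(X)=3$, while a reduced plane cubic with three nodes is neither irreducible (arithmetic genus $1$ forces $\delta\leq 1$) nor a smooth conic together with a line (at most two intersection points), so it is a union of three lines in general position, i.e.\ $f=xyz$ up to a linear change of variables.

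I expect the only genuinely non-formal steps to be the two classification statements in item (iii) --- that a reduced plane cubic with three nodes is a triangle of three lines, and the analogous conic fact --- which rest on the genus bound for plane curves rather than on the homological machinery; everything else reduces to a substitution into \eqref{formula_definite} together with the classical estimates \eqref{number_sing} and \eqref{number_sing_irred}. The main bookkeeping hazard will be keeping the sub-cases $e=d$ and $e=d-1$ correctly paired with the reducible versus irreducible node bounds.
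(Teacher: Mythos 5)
Your proposal is correct and follows essentially the same route as the paper: Arnold exponent $=1$ plus Theorem~\ref{minimalsyzygy} for (i), substitution of $\deg(R/J_f)=\tau(X)=|\Sing(X)|$ into \eqref{formula_definite} together with \eqref{number_sing} and \eqref{number_sing_irred} for (ii), and the resulting constraint $d\leq 2$ plus elementary classification of nodal conics and cubics for (iii). Two small remarks: your hedge ``(at least)'' in (ii) is warranted, since the reducible case $e=d-1$ actually yields $(d-1)(d-2)/2=d(d-3)/2+1$, stronger than the paper's stated $d(d-3)/2-1$ (apparently a sign typo there); and in your parenthetical alternative argument for (iii) the inequality $d^2+e(e-d)\leq d(d-1)/2$ with $e\geq d-1$ actually gives $d^2-d+2\leq 0$ rather than $(d-1)(d-2)\leq 0$ --- an even cleaner contradiction, and in any case the claim is already covered by the cited reference.
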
   
		\begin{proof}
			A nodal singularity is quasi-homogeneous with  Milnor number one. Therefore, $\tau(X)=\mu(X)=|\Sing{X}|$, hence $\deg(R/J_f)=|\Sing{X}|$ by Proposition~\ref{deg_of_gradient__is_Tjurina}.
			
			(i) The Arnold exponent of a nodal singularity is one (\cite[Example 2.2 (i)]{Dimca-Sernesi}). Therefore, $e\geq d-1$ by Theorem~\ref{minimalsyzygy}.
			
			(ii) Drawing upon (\ref{number_sing}) one has
			\begin{eqnarray*}
				\mathrm{Bour}(X)&\geq & d^2+e(e-d)-d(d+1)/2=d(d-1)/2+e(e-d)\\
				&=& \left\{\begin{array}{ll}
					d(d-1)/2 & \mbox{if $e=d$}\\
					d(d-3)/2 -1& \mbox{if $e=d-1$}.
				\end{array}\right.
			\end{eqnarray*}
			Similarly, if $f$ is irreducible then (\ref{number_sing_irred}) gives
			\begin{equation*}
				\mathrm{Bour}(X)\geq  \left\{\begin{array}{ll}
					d(d+1)/2 & \mbox{if $e=d$}\\
					1+d(d-1)/2 & \mbox{if $e=d-1$}.
				\end{array}\right.
			\end{equation*}
			
			(iii) If $f$ is irreducible, with $\bour(X)=0$, then the corresponding bounds in (ii) force $d\leq 1$, hence $d=1$ as we assume $d+1\geq2$.
			This is a contradiction as $f$ cannot be both a free divisor and a smooth curve of degree $\geq 2$.
			Thus, if $f$ is a free divisor, we can only use the first set of lower bounds in (ii). The latter forces $d\leq 2$.
			Thus, $f$ is either a quadric or else a cubic. Since we are assuming that $X$ is nodal, then $f$ must be either the union of two distinct lines or else the union of three non-concurrent lines.
		\end{proof}
		
		\begin{Theorem}\label{sing_is_one_node}
			Let $X$ be an irreducible curve of degree $d+1$. Then $\bour(X)=d^2-1$ if and only if $X$ has a unique singular point and this is a nodal point. 
		\end{Theorem}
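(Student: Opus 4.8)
The plan is to pit the exact formula $\bour(X)=d^2+e(e-d)-\deg(R/J_f)$ of \eqref{formula_definite} against two upper bounds: the ``Koszul bound'' $e:=\mathrm{indeg}(\syz(J_f))\le d$, coming from the nonzero syzygies of standard degree $d$ furnished by the Koszul relations among the degree-$d$ forms $f_x,f_y,f_z$; and the bound $\bour(X)\le e^2$ of Theorem~\ref{loweBour-number}. Under the standing hypotheses a constant syzygy is excluded (it would make $V(f)$ a cone), so $e\ge 1$; and the degenerate possibilities that the curve be smooth, free, or a cone are routinely ruled out in the situations below (for a free curve Theorem~\ref{Bourbakiideal}(a) would force $d^2+e(e-d)=\deg(R/J_f)$, incompatible with $\deg(R/J_f)=1$ and $1\le e\le d$ once $d\ge2$, while $d=1$ with an irreducible singular curve does not occur). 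Hence \eqref{formula_definite} applies throughout.

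\emph{``If'' direction.} Assume $X$ has a unique singular point $p$, a node. A node is quasi-homogeneous with $\mu_p=\tau_p=1$, so $\tau(X)=1$ and therefore $\deg(R/J_f)=1$ by Proposition~\ref{Tjurina_is_deg_grad}. Feeding this into \eqref{formula_definite} and into Theorem~\ref{loweBour-number} gives $d^2+e(e-d)-1\le e^2$, i.e.\ $ed\ge d^2-1$, which forces $e\ge d$; together with the Koszul bound $e\le d$ this gives $e=d$, whence $\bour(X)=d^2-1$.

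\emph{Converse.} Assume $\bour(X)=d^2-1$. By Proposition~\ref{smooth} the curve is not smooth, and it is not free, so \eqref{formula_definite} reads $\deg(R/J_f)=e(e-d)+1$. As $\deg(R/J_f)=\tau(X)\ge1$, we get $e(e-d)\ge0$, hence $e\ge d$ (since $e\ge1$), and again $e=d$. Therefore $\tau(X)=\deg(R/J_f)=1$, and since every local Tjurina number is at least $1$ there is exactly one singular point $p$, with $\tau_p(X)=1$. It remains to see that the germ of $X$ at $p$ is a node: in $\widehat{\OO}_p\cong k[[u,v]]$ the hypothesis $\tau_p(X)=1$ says $(f,f_u,f_v)=\mathfrak{m}_p$, and $f\in\mathfrak{m}_p^2$ as $p$ is singular, so by Nakayama $(f_u,f_v)+\mathfrak{m}_p^2=\mathfrak{m}_p$; thus the linear parts of $f_u,f_v$, namely $q_u,q_v$ with $q$ the degree-two part of $f$, span $\mathfrak{m}_p/\mathfrak{m}_p^2$. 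Hence $q$ is a nondegenerate binary quadratic form, so over the algebraically closed $k$ (with the standing characteristic hypothesis) it factors as a product of two distinct linear forms, i.e.\ $\mathrm{mult}_p(f)=2$ with reduced tangent cone. So $p$ is an ordinary node, and the proof is complete.

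The heart of the matter is the pincer forcing $e=d$ in both directions out of the two upper bounds and the exact degree formula; the local step identifying $\tau_p=1$ with the node is classical, and the only real care needed is the bookkeeping that disposes of the smooth/free/cone degeneracies so that \eqref{formula_definite} can be invoked.
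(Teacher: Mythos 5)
Your proof is correct, and it diverges from the paper's in one substantive way: the ``if'' direction. The paper gets $e=d$ for a uninodal curve by citing the computation in Dimca--Sernesi \cite[Example 2.2 (i)]{Dimca-Sernesi}, whereas you derive it internally by the pincer $d^2+e(e-d)-1=\bour(X)\le e^2$ (Theorem~\ref{loweBour-number}) against $e\le d$, which gives $d(d-e)\le 1$ and hence $e=d$. This makes the forward implication self-contained within the paper's own machinery, which is a genuine (if small) gain. In the converse your bookkeeping is essentially the paper's --- the formula \eqref{formula_definite} plus $\tau(X)\ge 1$ and $e\le d$ forces $e=d$ and $\tau(X)=1$, hence a unique singular point --- but your local identification of a $\tau_p=1$ point as a node is cleaner and, frankly, more correct than the paper's: the paper argues by contradiction that ``$(f,J_f)_p=\mathfrak m_p$ and $f\in\mathfrak m_p^2$ is a contradiction,'' which as written would rule out $\tau_p=1$ altogether (false, since nodes have $\tau_p=1$); your version correctly reads off that the linear parts of $f_u,f_v$ span $\mathfrak m_p/\mathfrak m_p^2$, so the quadratic part of $f$ is nondegenerate and $p$ is an ordinary node. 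The only nitpick is that your worry about excluding the free case before invoking \eqref{formula_definite} is unnecessary --- the remark following Definition~\ref{Defining_Bourbaki_degree} notes the formula holds for any reduced singular curve, free or not --- but your disposal of it is harmless and correct.
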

		\begin{proof}
			Assume that $\bour(X)=d^2-1$. Suppose that $X$ has more than one singular point. Then $\tau(X)>1$, hence by (\ref{formula_definite}) and (\ref{deg_of_gradient__is_Tjurina}),, one has
			$$d^2-1=\bour(X)=d^2+e(e-d)-\tau(X)<d^2-e(e-d)-1,$$
			hence $e(e-d)>0$, a contradiction since $e\leq d$.
			
			It follows that $X$ has a unique singular point because otherwise $\bour(X)=d^2$ by Proposition~\ref{smooth}.
			
			Now, suppose the unique singular point $p$ of $X$ is not nodal.
			Then we claim that $\tau(X)>1$. Indeed, otherwise, $\tau(X)=1$. That is, locally at $p$, the multiplicity of $(f,J_f)$ is one. By \cite[Theorem 40.6]{Nagata}, $(f,J_f)$ is the maximal ideal $\mathfrak{m}$ locally at $p$.
			Since $f\in \mathfrak{m}^2$, this is a contradiction.
			
			Next, by \eqref{formula_definite}, one has 
			$2\leq \tau(X)=e(e-d)+d^2-(d^2-1)=e(e-d)+1$. Then $e(e-d)\geq 1$, which is a contradiction. 
			
			Now, suppose that $X$  has a unique singular point and this is a nodal point.  
			Then, $\tau(X)=\mu(X)=1$. By \cite[Example 2.2 (i)]{Dimca-Sernesi}, the initial degree $e$ of $\syz(R/J_f)$ coincides with $d$. Again, by \eqref{degree_of_Bourbaki_ideal}, it follows that
			\[\bour(X)=e(e-d)+d^2-\tau(X)=d^2-\mu(X)=d^2-1,\]
			as required.
		\end{proof}

		\subsection{Relation to a theorem of Dolgachev}
		Let $X\subset \PP^2$ be a curve of degree $d+1$ defined by the reduced homogeneous polynomial $f\in R=k[x,y,z]$. The {\it polar degree} of $X$, denoted by $\deg(\nabla(f))$, is  the degree of the rational {\em polar map} 
		\[ \nabla(f)\colon \PP^2\dashrightarrow \PP^2,\quad p\mapsto[f_x(p):f_y(p):f_z(p)].\]
		By~\cite[section 3]{DimcaPapadima}, the polar degree is given by the formula 
		\begin{equation}\label{degreeformula}
			\deg(\nabla(f))=d^2-\mu(X).
		\end{equation}
		The curve $X$ is called {\it homaloidal} if $\deg(\nabla(f))=1$, i.e. if the polar map is a Cremona map of $\PP^2$. Now, drawing upon \eqref{formula_definite}, one has
		\begin{equation}\label{degbo}
			e^2-ed+\deg(\nabla(f))\leq \bour(X),
		\end{equation}
		with equality when $X$ has only quasi-homogeneous singularities. 
		
		Recall the theorem of Dolgachev \cite{Dolgachev} which tells us that a reduced homaloidal curve of $\PP^2$ has degree at most three.
		Once this is known, it becomes an easy exercise to verify the types of such homaloidal quadrics and cubics. In particular, any such cubic is either conjugate to $f=xyz$ or $f=x(y^2-xz)$.
		In particular, these are free divisors of exponents $(1,1)$.
		
		We isolate the following case of Dolgachev's theorem, with an argument via the Bourbaki number:
		
		\begin{Proposition}
			Let $X=V(f)\subset \PP^2$ stand for a reduced homaloidal curve of $\PP^2$.
			If the singularities of $X$ are quasi-homogeneous then $X$ is either a free divisor of $\deg f=3$ or a non-degenerate quadric.
		\end{Proposition}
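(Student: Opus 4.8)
The plan is to convert the homaloidal hypothesis into the numerical identity $\bour(X)=e(e-d)+1$, where $e=\mathrm{indeg}(\syz(R/J_f))$, via the equality case of \eqref{degbo}, and then to squeeze the pair $(d,e)$ between the lower bound $\bour(X)\geq 1$ valid for non-free curves and the near-freeness characterization, so that only two configurations can survive.

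First I would dispose of the smooth case directly: if $X$ is smooth then $\mu(X)=0$, so \eqref{degreeformula} and $\deg(\nabla(f))=1$ force $d^2=1$, hence $d=1$ and $X$ is a smooth plane conic, i.e. a non-degenerate quadric. From now on $X$ is singular, so $\mu(X)\geq 1$; since $\mu(X)=d^2-\deg(\nabla(f))=d^2-1$, this gives $d\geq 2$. Because the singularities are quasi-homogeneous, $\tau(X)=\mu(X)=d^2-1$, whence $\deg(R/J_f)=d^2-1$ by Proposition~\ref{Tjurina_is_deg_grad}, and the equality case of \eqref{degbo} (equivalently, \eqref{degree_of_Bourbaki_ideal}) yields $\bour(X)=e(e-d)+1=1-e(d-e)$.

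Now I would split according to whether $f$ is a free divisor. If $f$ is not free, then $I_\epsilon$ is a proper codimension-two ideal by Theorem~\ref{Bourbakiideal}(b), so $\bour(X)=\deg(R/I_\epsilon)\geq 1$; combined with $\bour(X)=1-e(d-e)$ and $1\leq e\leq d$, this forces $e(d-e)=0$, hence $e=d$ and $\bour(X)=1$. Then Proposition~\ref{Bour_for_nearlyfree} shows $X$ is nearly free, so $e$ equals the smaller exponent $a_1$, which satisfies $2a_1\leq a_1+a_2=d+1$; thus $e=a_1\leq (d+1)/2<d$ since $d\geq 2$, contradicting $e=d$. Therefore $f$ must be a free divisor, and then Theorem~\ref{Bourbakiideal}(a) gives $\deg(R/J_f)=d^2+e(e-d)$; equating with $d^2-1$ gives $e(d-e)=1$, forcing $e=1$ and $d=2$, i.e. $\deg f=3$. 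This exhausts the cases and proves the statement.

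The only place that needs real care is the ``$f$ not free'' branch: there I rely both on the sharp fact that a non-free curve has $\bour(X)\geq 1$ and on Proposition~\ref{Bour_for_nearlyfree} together with the exponent inequality $a_1\leq (d+1)/2$ for nearly free curves to eliminate $e=d$. Everything else is a short bookkeeping with \eqref{degreeformula}, \eqref{degbo} and Theorem~\ref{Bourbakiideal}, and I do not anticipate any genuine obstacle.
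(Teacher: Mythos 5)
Your proof is correct, and it follows the same numerical backbone as the paper's argument: both reduce the homaloidal plus quasi-homogeneous hypotheses to the identity $\bour(X)=e(e-d)+1$ and then squeeze $(d,e)$. Where you diverge is in how the cases are organized and closed. The paper keeps the smooth and singular situations together, uses only $\bour(X)\geq 0$ to get $e(e-d)\in\{-1,0\}$, reads off the free cubic from $e(e-d)=-1$, and in the branch $e=d$, $\bour(X)=1$ it invokes near-freeness and then argues that the three Koszul relations must be the minimal generators of $\syz(J_f)$, forcing $f$ smooth and $d=1$. You instead peel off the smooth case at the outset via $\deg(\nabla f)=d^2-\mu(X)$, and in the singular case you split on freeness: for non-free curves you use the sharper bound $\bour(X)=\deg(R/I_\epsilon)\geq 1$ (since $R/I_\epsilon$ is a nonzero one-dimensional ring) to force $e=d$ and $\bour(X)=1$, and then kill this branch with the exponent inequality $e=a_1\leq (d+1)/2<d$ for nearly free curves with $d\geq 2$; in the free case you equate $d^2+e(e-d)=\tau(X)=d^2-1$ directly. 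Your elimination of the $e=d$ branch via the identity $a_1+a_2=d+1$ is arguably tighter than the paper's Koszul-relation argument, which leaves implicit why the three degree-$d$ minimal generators of a nearly free module must coincide with the Koszul syzygies; on the other hand, the paper's version is shorter because it never separates the smooth case. Both arrive at exactly the two advertised configurations.
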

		\begin{proof}
			By the above preliminaries, under the present assumptions, one has $\bour(X)=e(e-d)+1$.
			Since lives in the initial degree of $\syz(R/J_f)$, then $1\leq e\leq d$.
			Since $\bour(X)\geq 0$, it follows that $e(e-d)\geq -1$.
			Thus, either, say, $e(e-d)= -1$., forcing $e=1,d=2$ Moreover, $\bour(X)=0$, which means that $X$ is a free divisor, necessarily of exponents $(1,1)$.
			
			Else, $e(e-d)\geq 0$, hence forcefully, $e(e-d)=0$.
			This means that $e=d$ and $\bour(X)=1$. By Proposition~\ref{Bour_for_nearlyfree}, the latter means that $X$ is a nearly-free divisor. But, the initial degree of  $\syz(R/J_f)$ is the degree of a Koszul relation, and there are three of these which are independent. Since $\syz(R/J_f)$ is minimally generated by three elements for a nearly-free curve $f$, we must conclude that $f$ is smooth.
			But, for the latter, $\bour(X)=d^2$ (Proposition~\ref{smooth}) hence $d^2=1$, which means that $f$ is a smooth quadric.
		\end{proof}
		\begin{Remark}
			Since quasi-homogeneity and the generically complete intersection property are interchangeable in our context, the above is a rephrasing of \cite[Corollary 3.6]{AHA}.
			The generalization of either of these two assumptions to arbitrary dimensions has some impact on the degree of a homaloidal hypersurface (see \cite{Abbas}, \cite{Sim_on_Dolg}).
		\end{Remark}

		\begin{Question}
			\begin{enumerate}
				\item Can one anticipate a rough classification of  plane curves of low  polar degree  by drawing upon the above relation between the Bourbaki degree and the polar degree?
				\item A conjecture of A. Dimca and G. Sticlaru claims that any rational cuspidal plane curve is either free or nearly free. If $X$ is rational and cuspidal, then $\deg(\nabla(f))=d$, in particular, $e^2-ed+d\leq \bour(X). $
				Can these data help advance a facet of that conjecture?
			\end{enumerate}   
		\end{Question}

	\end{document}